\newcommand{\N}{\mathbf{N}}
\newcommand{\R}{\mathbf{R}}
\newcommand{\C}{\mathbb{C}}
\newcommand{\Hc}{\mathcal H}
\begin{document}

\title*{On the controllability in projections for linear quantum systems}
\author{Nabile Boussa\"id, Marco Caponigro, and Thomas Chambrion}
\institute{Nabile Boussa\"id \at Universit\'e Bourgogne Franche-Comt\'e,
Laboratoire de math\'ematiques de Besan\c{c}on, 16 route de Gray, 25030
Besan\c{c}on Cedex, France. \email{nabile.boussaid@univ-fcomte.fr}
\and 
Marco Caponigro \at Dipartimento di Matematica, Universit\`a di Roma ``Tor Vergata'', Via della Ricerca Scientifica 1, 00133 Roma, Italy \email{caponigro@mat.uniroma2.it}
\and 
Thomas Chambrion \at 
Universit\'e Bourgogne Franche-Comt\'e,
Institut mathématique de Bourgogne, 9 avenue Alain Savary, BP47870, 21078 Dijon, France
\email{Thomas.Chambrion@u-bourgogne.fr}}

\maketitle

\abstract{
We present sufficient conditions for the exact controllability in projection of the linear Schr\"odinger equations in the case where the spectrum of the free Hamiltonian is pure point. We consider the general case in which the Hamiltonian may be nonlinear with respect to the control. The controllability result applies, in particular,
to Schr\"odinger equations with a polarizability term. 
}

\section{Introduction}

\subsection{Control of linear quantum systems}

In a complex Hilbert space $\Hc$ with hermitian product $\langle\cdot, \cdot\rangle$, we consider the linear Schr\"odinger equation
\begin{equation}\label{eq:main}\tag{LS}
\begin{cases}
\partial_t \psi =-i  H(u)\psi,\\
\psi(0)=\psi_0,
\end{cases}
\end{equation}
where 
$\psi(t)$ is the state of the system at time $t$, 
$\psi_0 \in  \Hc$ is the given initial datum,
$H(u)$ is a self-adjoint linear operator in $\Hc$ depending on a real-valued function $u$.
The operator $H(u)$ is usually called \emph{Hamiltonian}.
The parameter $u$ is a control
 representing our ability to influence the system and accounts for an external, appropriately shaped, electromagnetic field.
Equation~\eqref{eq:main} models many quantum systems under
the assumptions that the
interaction with the external environment (i.e. decoherence) is negligible and  that the control $u$ can be treated
as a classical field.

The aim of quantum control is to 
design a (open-loop) \emph{control} $u:[0,T]\to \R$ such that the solution of 
\eqref{eq:main}  satisfies desirable properties, for instance: the final point 
$\psi(T)$ is close, in some sense, to a given target.  

The controllability of system~\eqref{eq:main} is a well-established topic when the state space $\Hc$ is 
finite-dimensional, i.e. when the quantum mechanical system under consideration can
be adequately approximated as a system having a finite number of energy levels, see, for instance~\cite{dalessandro-book} or~\cite{borzibook} and references therein. 
Many controllability results in the finite-dimensional framework rely on general controllability methods for left-invariant control systems on compact Lie groups (\cite{JS72, JK81, GB82, AGK96}).

When the state space $\Hc$  is infinite-dimensional the situation is complicated by the subtleties of the evolution in Banach spaces and the consequent fragmental nature of controllability theory for PDEs.
For this reason, most of the current literature focuses on closed (i.e., conservative) systems and on the dipolar approximation, i.e. when the Hamiltonian $H(u) = H_0 + u H_1$ is linear with respect to the control.
For this kind of, so called, bilinear systems, one of the first known results is a negative one: indeed when $H_1$ is a bounded operator the bilinear Schr\"odinger equation is not exactly controllable, namely the attainable set has empty interior as a meagre set~\cite{BMS,turinici}.  
The obstruction to exact controllability holds even when considering very large class of controls~\cite{UP}, as for instance 
$L^1$ controls~\cite{BMS_CDC} or Radon measures~\cite{Guide_CDC}.
In certain cases, it is possible to prove exact controllability for the potential well in suitable 
functional spaces on a real interval (see~\cite{beauchard-coron, camillo, MN15}). 
The results extend to a system describing a particle confined on a radially symmetric $2$D domains~\cite{Moyano}. However in higher dimension and for more general systems the exact description of the reachable set seems a difficult task. The literature hence focuses on
weaker controllability properties.
Approximate controllability results have been obtained with different techniques:
adiabatic control (\cite{Boscain_Adami, adiabatiko}), Lyapunov methods (\cite{mirrahimi-continuous, Nersy, nersesyan, fratelli-nersesyan}), and Lie-algebraic methods (\cite{Schrod, rangan, Schrod2, periodic, BCS14, KZSH14, PS15, Esatta}).

Although the dipolar approximation usually gives excellent results for low intensity fields, it is sometimes necessary, 
when dealing with stronger fields, to consider a better approximation of $H(u)$ involving more terms of its expansion 
in $u$. 
In the control of orientation 
of a rotating HCN molecule, for instance, the
 model involves a quadratic term~\cite{DBAKU, DBAKU2} call polarizability term, and $H(u) = H_0 + u H_1 + u^2 H_2$.
This question has been tackled with Lyapunov methods for finite dimensional approximations in~\cite{Coron,Grigoriou} 
and in \cite{Morancey} for the infinite dimensional version for a class of regular systems. 

In this paper we study the general case when the Hamiltonian $H(u)$ is nonlinear in the control. We prove that, under generic condition, it is possible to control~\eqref{eq:main} approximately in a very strong sense. 
Theorem~\ref{thm:main} states that for every dimension $n$ of the Galerkin approximation and every initial and target condition, there exists a
control steering in finite time the infinite-dimensional control system from the initial condition
to a final state having the same first $n$ coordinates as the target (as long as the remaining coordinates do not vanish simultaneously). In quantum systems, this
problem is quite natural, since in physical experiments one can measure with accuracy only the
low-energy states. From the mathematical viewpoint, this kind of study is particularly interesting since the regularity of system, needed in order to define the solution, is, in fact, an obstacle to exact controllability (see, for instance~\cite{UP}). Exact controllability in
projection has been introduced and proved using geometric techniques based on controllability
result for the Galerkin approximations in~\cite{Navier-Stokes} for the $2$D Euler and Navier–Stokes equations
(see~\cite{shirikyan} for the $3$D case).

Our approach is based on Lie-algebraic techniques.
Lie-algebraic methods usually provide intrinsic, robust, and sometimes explicit control results for quantum mechanical systems. For these reasons, these techniques represent the main tool in the controllabilty of finite dimensional quantum systems.  In the infinite-dimensional case, however,
even the extension of the notion of Lie-bracket to infinite dimensional operators is not trivial.
While there are Lie-algebraic based control results in the presence of bounded operator~\cite{BB14} when dealing with unbounded operators the notion of Lie algebra is not well-defined in general.
The Lie--Galerkin condition developed in~\cite{BCS14} for bilinear systems combines a Lie-algebraic finite-dimensional condition with the Galerkin method to find sufficient
conditions for approximate controllability. Indeed the underlying idea of this
Lie–Galerkin technique is to choose a suitable sequence of Galerkin approximations, then to
use finite-dimensional geometric control tools in order to prove strong controllability (in some
suitable sense) for each Galerkin approximation, and finally to show how these controllability
properties provide approximate controllability for the original infinite-dimensional system.
This condition has been used to prove exact controllability in projections with piecewise constant controls for the bilinear Schr\"odinger equation~\cite{Esatta}. In this paper we extend the notion of Lie--Galerkin condition to infinite dimensional linear systems and prove 
 exact controllability in projections for~\eqref{eq:main} by means of piecewise constant controls taking only two values.
The main tool used in the proof is the continuity of the propagators of bilinear systems. This fact, combined with the approximation results of Section~\ref{sec:bangbang} allows to to infer controllability of the bilinear system with ``bang-bang'' controls, i.e. piecewise constant controls taking only two values allows, which in turns implies approximate controllability for~\eqref{eq:main}.

The proof of the main result, in Section~\ref{sec:linear} is based on a refined analysis on the controllability of a bilinear system presented in Section~\ref{sec:BSE}. The result is then applied to the controllability of systems with a polarizability term in Section~\ref{SEC_examples}.

\subsection{Framework and definition of propagators}\label{SEC_well_posedness}

The well-posedness of system \eqref{eq:main} when $H(u)$ is an unbounded operator on a infinite-dimensional space $\Hc$ is, in general, not trivial.
In order to define the solution we assume the following condition.

\begin{assumption}\label{ass:minimal}
The operators $H(0)$ and $H(1)$ are self-adjoint. $H(0)$  has 
pure point spectrum with an associated orthonormal basis $\Phi$ of eigenvectors 
and $H(1)-H(0)$ is bounded.
\end{assumption}

We denote by $\Phi=(\phi_k)_{k\in \N}$ 
 the complete orthonormal family of eigenvectors of $H(0)$ and by $(\lambda_k)_{k \in \N}$ the associated eigenvalues (that is, for all $k$ in $\N$, $H(0)\phi_k =\lambda_k \phi_k$).

Under Assumption~\ref{ass:minimal} it is possible to define the propagator  $\Upsilon^u$ of $-iH(u)$ and, hence, the solution of ~\eqref{eq:main} associated with a piecewise constant control $u:[0,T] \to \{0,1\}$.
Indeed,
 since $H(1)-H(0)$ is bounded
 the domain $D(H(1))$ of the self-adjoint operator $H(1)$ contains 
the domain $D(H(0))$ and, in particular,
every eigenvector $\phi\in\Phi$.
Therefore,
if 
$$
u=\sum_{j=1}^{m} u_{j}\mathbf{1}_{[t_{j-1},t_j)}, \quad u_j = 0,1,
$$
for $0=t_0<t_1 < \dots <t_m$,
then, one can define the associated propagator as
\begin{equation}\label{eq:defsolutionbangbang}
 \mathcal{Y}^u_{t}=e^{-i(t-t_k)H(u_k)} \circ e^{-i(t_k-t_{k-1})H(u_{k-1})} \circ
 \cdots \circ e^{-i t_1 H(u_1)},
\end{equation}
where 
$t_k\leq t<t_{k+1}$.

\subsection{Notation}

For $N \in \N$, $\mathbb{M}_{N}(\C)$ is the set of $N\times N$ matrices with entries in $\C$. 
The identity matrix of order $N$ is $I_N$.
The group of special unitary matrices $SU(N)$ is  
\[ SU(N)= \{M\in \mathbb{M}_N (\mathbf{C})| ~\overline{M}^T M =I_N \mbox{ and } \det M =1\},\]
which is a Lie group whose Lie algebra is 
 \[\mathfrak{su}(N)=\{M \in  \mathbb{M}_N (\mathbf{C}) | ~\overline{M}^T + M= 0 \mbox{ and } TrM=0\}. \]
This Lie algebra, seen as a real linear space, has dimension $N^2-1$.
We denote by $U(\Hc)$ the set of unitary operators on $\Hc$.

\subsection{Main result}
For every $n$ in $\N$, we define 
$$
{\cal L}_n^\Phi=\mathrm{span}\{\phi_1,\ldots,\phi_n\},
$$
and the projection of $\Hc$ on ${\cal L}_n^\Phi$, namely
\begin{equation}\label{eq:projectionphi}
 \begin{array}{llcl}
\Pi^{\Phi}_n&:\Hc & \rightarrow & \Hc\\
      & \psi & \mapsto & \sum_{j=1}^{n} \langle \phi_j,\psi\rangle \phi_j\,.
\end{array}
\end{equation}
The compression of a linear operator $Q$ on $\Hc$, with 
${\cal L}_n^\Phi \subset D(Q)$,
is denoted by
$$
Q^{(\Phi,n)}=\Pi^{\Phi}_n Q_{\upharpoonright {\mathcal L}_n^\Phi}.
$$

\begin{remark}
The projections in~\eqref{eq:projectionphi} and, as a consequence, the compressions of operators strongly depend on the basis $\Phi$. However, for the sake of readability
 we drop the mention to $\Phi$.
\end{remark}

\begin{remark}
When it does not create ambiguities we identify $\mathrm{Im}(\Pi_{n}) = {\cal L}_n^\Phi$ with $\C^{n}$. Given a linear operator $Q$ on $\Hc$ we identify the linear operator
$Q^{(n)} = \Pi_{n} Q \Pi_{n}$ preserving 
$\mathrm{span}\{\phi_{1},\ldots, \phi_{n}\}$ with 
its  
$n \times n$ complex matrix representation with 
respect to the basis $(\phi_{1},\ldots, \phi_{n})$.
\end{remark}

Define
$$
H_0^{(n)} = \Pi_{n} H(0) \Pi_{n} \quad \mbox{ and } \quad H_1^{(n)} = \Pi_{n} H(1) \Pi_{n}.
$$
Let us introduce the set $\Sigma_{n}$ 
of spectral gaps associated with the first $n$ eigenvalues of $H(0)$ as
$$
\Sigma_n = \{|\lambda_l - \lambda_k| \mid l,k = 1, \ldots, n\}.
$$
For every $\sigma \geq 0$, every $m\in\N$, and every $m\times m$ matrix $M$, let
$$
{\cal E}_\sigma(M)=  (M_{l,k} \delta_{\sigma, |\lambda_l - \lambda_k|})_{l,k=1}^m,
$$
where $\delta_{\cdot,\cdot}$ denotes the Kronecker symbol. The $n\times n$ matrix ${\cal E}_\sigma(H_1^{(n)})$, corresponds then to the  ``selection'' in $H_1^{(n)}$ of the spectral gap $\sigma$:  
the $(l,k)$-elements such that $|\lambda_{l}-\lambda_{k}|\neq \sigma$ are set to $0$.

Define
\begin{align}
\Xi_n=\big\{ \sigma \in \Sigma_n & \mid
(H_1)_{k,l}\delta_{\sigma, |\lambda_l - \lambda_k|} =0, 
\nonumber\\
&\qquad\mbox{ for every }
k=1,\ldots,n \mbox{ and }  l>n\big\}.\label{sigmabar}
\end{align}

The set $\Xi_n$ can be seen as follows: If $\sigma\in\Xi_n$ then the matrix
$M =\mathcal{E}_\sigma(H_1^{(n)}) $ is such that 
$$
 \mathcal{E}_\sigma(H_1^{(N)}) = 
\left(
\begin{array}{c|c}
M&0\\ \hline 0 & *
\end{array}
\right)
\mbox{ for every } N>n,
$$
or, which is equivalent,
\begin{equation}\label{eq:comm}
\left[
\Pi_n, \mathcal{E}_\sigma(H_1^{(N)})
\right]=0\qquad \mbox{ for every } N>n.
\end{equation}

In particular $\mathrm{span}\{\phi_{1},\ldots,\phi_{n}\}$ is invariant for the evolution of ${\cal E}_\sigma(H_1^{(N)})$ for every $N>n$.
The spectral gaps $\sigma \in \Xi_n$ are, therefore, those for which the selections ${\cal E}_\sigma(H_1^{(n)})$ 
define finite dimensional dynamics of order $n$ decoupled from the infinite dimensional evolution.

The main assumption of this paper, introduced in the definition below, is the extension of the Lie--Galerkin condition~\cite{BCS14} to system~\eqref{eq:main}.

\begin{definition}\label{def:CCC}
For every $n \in \N$ define 
\begin{align*}
 \mathcal{M}_{n} = 
\left\{iH_0^{(n)} 
\right\}
\cup \left\{i{\cal E}_\sigma(H_1^{(n)})\mid
\sigma \in \Xi_n \right\}.
\end{align*}
We say that the \emph{Lie--Galerkin condition}\ holds for system~\eqref{eq:main} if 
for every $n_0\in \mathbb{N}$ there exists $n> n_0$ 
such that $0\in \Xi_n$ and
\begin{equation}\label{hypothesis}
\mathrm{Lie}\mathcal{M}_{n}  \supseteq \mathfrak{su}(n).
\end{equation}
\end{definition}

We can now state the main result of exact controllability in projections.

\begin{theorem}\label{thm:main}  
Let system~\eqref{eq:main} satisfy Assumption~\ref{ass:minimal} and
the Lie--Galerkin  condition.
Then, for every $N\in \N$, $\psi_0, \psi_1\in \mathrm{span}(\Phi)$, with $\|\psi_0\| = \|\psi_1\|=1$ and
$
\|\Pi_N(\psi_1)\| < 1
$ 
there exists a piecewise constant control $u:[0,T] \to \{0,1\}$ such that
$$
\Pi_N(\mathcal{Y}^u_T(\psi_0)) = \Pi_N(\psi_1). 
$$
\end{theorem}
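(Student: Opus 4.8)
The plan is to reduce the exact-controllability-in-projection statement for the nonlinear system~\eqref{eq:main} to a controllability statement for a finite-dimensional bilinear system on $SU(n)$, using the Lie--Galerkin condition as the bridge between the Galerkin approximation and the true infinite-dimensional evolution. First I would fix $N$, $\psi_0$, $\psi_1$ as in the statement, and invoke the Lie--Galerkin condition to pick $n>N$ (large enough to also ``cover'' $\psi_0,\psi_1$ in the sense that their essential mass in $\mathrm{span}(\Phi)$ is captured, or at least so that $\|\Pi_N\psi_1\|<1$ still leaves room in the $n$-dimensional block) with $0\in\Xi_n$ and $\mathrm{Lie}\,\mathcal{M}_n\supseteq\mathfrak{su}(n)$. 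The role of $0\in\Xi_n$ is crucial: it means the diagonal-in-resonance part of $H_1^{(n)}$ is decoupled from the tail, so that, together with $iH_0^{(n)}$, the family $\mathcal{M}_n$ generates dynamics that genuinely live on the $n$-dimensional block without leaking to infinity.

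Next I would set up the finite-dimensional target. Since $\|\Pi_N\psi_1\|<1$, there is a unit vector in $\mathrm{span}(\Phi)$ agreeing with $\psi_1$ on the first $N$ coordinates and supported in the first $n$ coordinates (spreading the remaining mass into coordinates $N+1,\dots,n$); call its $n$-dimensional representative $x_1\in\C^n$, and let $x_0=\Pi_n\psi_0\in\C^n$ (after truncating $\psi_0$, whose tail I will handle by continuity). By controllability on $SU(n)$ — which follows from $\mathrm{Lie}\,\mathcal{M}_n\supseteq\mathfrak{su}(n)$ via the classical theory of left-invariant systems on compact Lie groups — there is a control, realized as a finite concatenation of exponentials drawn from $\mathcal{M}_n$, steering $x_0$ to (a phase multiple of) $x_1$ in the Galerkin system. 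The point of the set $\Sigma_n$, the selections $\mathcal{E}_\sigma$, and the restriction to $\sigma\in\Xi_n$ is that each such exponential $e^{-it\,\mathcal{E}_\sigma(H_1^{(n)})}$ can be realized, approximately, by the genuine infinite-dimensional propagator: one uses the rotating-wave / averaging argument (time reparametrization with fast oscillations of the control between $0$ and $1$, or rather the bang-bang approximations of Section~\ref{sec:bangbang}) so that the effective dynamics of the true propagator $\mathcal{Y}^u$ on the first $n$ coordinates approaches $e^{-it\,\mathcal{E}_\sigma(H_1^{(n)})}$, while the error in the projection $\Pi_n$ goes to $0$. Here the continuity of the propagators of bilinear systems, emphasized in the introduction, is exactly what lets us pass from the idealized selected dynamics to achievable bang-bang controls taking values in $\{0,1\}$.

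The main obstacle is controlling the discrepancy between the finite-dimensional idealized evolution on $\C^n$ and the action of the true propagator $\mathcal{Y}^u_T$ on $\Pi_n$, and in particular showing that the tail of $\psi_0$ and the higher modes created during the evolution do not contaminate the first $N$ coordinates at the final time. I would handle this by: (i) choosing the approximation parameters (number of bang-bang switches, oscillation frequencies) so that $\|\Pi_n\mathcal{Y}^u_T\psi_0 - e^{-i\theta}x_1\|$ is as small as desired; (ii) then noting that $\Pi_N = \Pi_N\Pi_n$, so the error on $\Pi_N$ is controlled by the error on $\Pi_n$; and (iii) removing the residual phase $\theta$ either by a global phase freedom (irrelevant to $\Pi_N$ up to that phase) or by appending an $e^{-is H(0)}$ rotation — using $0\in\Xi_n$ to keep this rotation block-diagonal — to land on $\Pi_N\psi_1$ exactly rather than approximately. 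To get the \emph{exact} equality $\Pi_N(\mathcal{Y}^u_T\psi_0)=\Pi_N(\psi_1)$ rather than an approximate one, the final step is a continuity/open-mapping argument: the map from the finitely many switching times and frequencies to $\Pi_N(\mathcal{Y}^u_T\psi_0)\in\C^N$ is continuous and, by the transitivity on $SU(n)$ (hence on the sphere of $\C^n$, hence surjecting onto the relevant set of $\Pi_N$-values), its image contains a neighborhood of $\Pi_N\psi_1$, so one can solve exactly — this is the heart of the "exact controllability in projection" phenomenon and is where the detailed bilinear analysis of Section~\ref{sec:BSE} is actually used.
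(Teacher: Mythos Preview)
Your proposal is essentially correct and follows the paper's approach: reduce to the bilinear system with $A=-iH(0)$, $B=-i(H(1)-H(0))$ (so that for $\{0,1\}$-valued controls the propagators coincide), use the Lie--Galerkin condition to get controllability of the $n$-dimensional Galerkin system, realize the selected dynamics $\mathcal{E}_\sigma(H_1^{(n)})$ via bang-bang averaging, and upgrade approximate to exact by a topological stability argument. One point to sharpen: your last step invokes ``transitivity on $SU(n)$ plus continuity'' to conclude the image contains a neighborhood of $\Pi_N\psi_1$, but continuity and surjectivity of the ideal map $F$ alone do not guarantee that a nearby continuous map $G$ hits the target---the paper secures this via \emph{normal controllability} (the parametrization $(s_1,\dots,s_\nu)\mapsto e^{s_\nu M_\nu}\cdots e^{s_1 M_1}$ has full rank, making $F$ a local diffeomorphism) followed by a degree-theoretic stability lemma (if $\sup_{\partial X}|F-G|$ is smaller than $\mathrm{dist}(\Pi_N\psi_1,F(\partial X))$ then $\Pi_N\psi_1\in G(X)$), which is exactly the content of Section~\ref{SEC_SUB_SUB_Degree} and Lemma~\ref{lem:PConto}.
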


\section{The Bilinear Schr\"odinger Equation}\label{sec:BSE}

In this section we focus on the case $H(u) = iA  + u iB$, that is when $H$ is linear with respect to the control $u$. The corresponding system is usually called bilinear because it is linear with respect to the state and with respect to the control. 
We aim at proving that the bilinear case is exactly controllable in projection by means of bang-bang controls, 
i.e. piecewise constant controls taking value $0$ and $1$. We will prove in Section~\ref{sec:linear} that this 
result implies Theorem~\ref{thm:main}.

Consider the system
\begin{equation}\label{eq:BSE}\tag{BSE}
\begin{cases}
\partial_t \psi = A\psi + u(t) B\psi,\\
\psi(0)=\phi_0 \in  \Hc,
\end{cases}
\end{equation}
satisfying the following assumption.
\begin{assumption}\label{ass:bls}
The pair of operators $(A,B)$ 
is such that
\begin{itemize}
\item The skew-adjoint operator $A$ has pure point spectrum with an associated complete orthonormal basis $\Psi$ of eigenvectors ;
\item The operator
$B$ is skew-symmetric and bounded.
\end{itemize}
\end{assumption}

 By
Assumption~\ref{ass:bls}
for every  $u\in \R$, $A+uB$ defined on ${\rm Span}(\Psi) \subset D(A)$ is essentially skew-adjoint.
We can, therefore, define the solution of~\eqref{eq:BSE} associated with a piecewise constant control 
$$
u=\sum_{j=1}^{m} u_{j}\mathbf{1}_{[t_{j-1},t_{j})}
$$
with $u_j\in [0,\delta]$, for $j=1,\ldots, m$,  $0=t_0<t_1 < \dots <t_m$
as the concatenation 
$$
 \Upsilon^u_{t}=e^{(t-t_k)(A+u_k B)} \circ e^{(t_k-t_{k-1})(A+u_{k-1}B)} \circ
 \cdots \circ e^{t_1(A+u_1B)},
$$
where $t_k\leq t < t_{k+1}$.
In analogy with the definition of solution associated with a bang-bang control~\eqref{eq:defsolutionbangbang}.

\begin{remark}\label{rk:continuitybilinear}
One of the main advantages of dealing with bilinear control systems is that, when the control operator $B$ is bounded on $\Hc$, 
then the propagator $\Upsilon$ of the system is continuous with respect to the control (see~\cite{BMS,UP}) in the sense:
If $(u_k)_{k\in \N}$ is a sequence of piecewise constant controls, $u_n:[0,T] \to [0,\delta]$, converging in $L^1([0,T])$ to a piecewise constant control 
$u:[0,T] \to [0,\delta]$ then
$$
\Upsilon_t^{u_k}\psi \to \Upsilon_t^u\psi \quad \mbox{ as } k \to \infty,
$$
uniformly in $t\in [0,T]$ for every $\psi \in \Psi$.
\end{remark}

We say that system~\eqref{eq:BSE} satisfies the Lie--Galerkin  condition if the system~\eqref{eq:main} with $H(0)=i A$ and $H(1)$ as the closure of $i(A+B)$ satisfies the Lie--Galerkin  condition.

\begin{theorem}\label{thm:bilinear}
Let $(A,B)$ satisfy Assumption~\ref{ass:bls} and let system~\eqref{eq:BSE} satisfy the Lie--Galerkin  condition.
Then, for every $N\in \N$, $\psi_0, \psi_1\in \mathrm{span}(\Phi)$, with $\|\psi_0\| = \|\psi_1\|=1$ and
$
\|\Pi_N(\psi_1)\| < 1
$ 
there exists a piecewise constant control $u:[0,T] \to \{0,1\}$ such that
$$
\Pi_N(\Upsilon^u_T(\psi_0)) = \Pi_N(\psi_1). 
$$
\end{theorem}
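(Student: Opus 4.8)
\noindent\textit{Idea of proof.}\quad
The strategy is to extract from the Lie--Galerkin condition a transitivity statement for a finite-dimensional control system, to transport it to~\eqref{eq:BSE} with $\{0,1\}$-valued controls by the averaging mechanism together with Remark~\ref{rk:continuitybilinear} and the bang-bang approximation of Section~\ref{sec:bangbang} (obtaining approximate controllability in $\Hc$), and finally to upgrade approximate controllability to exact controllability in projection by a degree argument based on the hypothesis $\|\Pi_N(\psi_1)\|<1$. Since $\psi_0,\psi_1\in\mathrm{span}(\Phi)$ there is $m\ge N$ with $\psi_0,\psi_1\in{\cal L}_m^\Phi$; by the Lie--Galerkin condition fix $n>m$ with $0\in\Xi_n$ and $\mathrm{Lie}\,{\cal M}_n\supseteq\mathfrak{su}(n)$. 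First I would treat the invariant control system generated by the (skew-Hermitian) matrices of ${\cal M}_n$, that is by the drift $iH_0^{(n)}$ and the fields $i{\cal E}_\sigma(H_1^{(n)})$, $\sigma\in\Xi_n$. Since $\mathrm{Lie}\,{\cal M}_n\supseteq\mathfrak{su}(n)$, classical results on invariant systems on compact Lie groups (Jurdjevic--Sussmann) show that the set of operators attainable from the identity is the whole (compact) group generated by the $e^{tX}$, $X\in{\cal M}_n$, $t\in\R$; in particular it contains $SU(n)$, which acts transitively on the unit sphere of $\C^n$ (as $n\ge2$), so $\psi_1=g\psi_0$ for some attainable $g\in SU(n)$.

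Next I would realize each exponential $e^{tX}$, $X\in{\cal M}_n$, up to an arbitrarily small error and without leaking mass out of ${\cal L}_n^\Phi$, by a $\{0,1\}$-valued piecewise constant control of~\eqref{eq:BSE}. First, by the averaging mechanism underlying the Lie--Galerkin technique, each such exponential is approached, uniformly on ${\cal L}_n^\Phi$, by propagators $\Upsilon^v_s$ of piecewise constant controls $v$ valued in a fixed bounded set: the flow of $iH_0^{(n)}$ is a time-rescaling of the free propagator $\Upsilon^0_t=e^{tA}$ restricted to ${\cal L}_n^\Phi$, while the flow of $i{\cal E}_\sigma(H_1^{(n)})$ arises as the high-frequency limit of controls oscillating with period $2\pi/\sigma$ (constant if $\sigma=0$) with an appropriate amplitude and duty cycle; since $\sigma\in\Xi_n$, the block structure~\eqref{eq:comm} ensures that this limiting evolution keeps ${\cal L}_n^\Phi$ invariant, so no mass escapes. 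Second, the approximation results of Section~\ref{sec:bangbang} replace each such $v$ by a $\{0,1\}$-valued piecewise constant control that is $L^1$-close to it, and by Remark~\ref{rk:continuitybilinear} the corresponding propagator is then $L^\infty$-close. Concatenating the pieces so obtained, for every $\varepsilon>0$ we get $u\colon[0,T]\to\{0,1\}$ piecewise constant with $\|\Upsilon^u_T\psi_0-\psi_1\|<\varepsilon$, hence $\|\Pi_N(\Upsilon^u_T\psi_0)-\Pi_N(\psi_1)\|<\varepsilon$: this is approximate controllability in $\Hc$.

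To reach the exact identity $\Pi_N(\Upsilon^u_T\psi_0)=\Pi_N(\psi_1)$, consider ${\cal G}\colon SU(n)\to\C^N$, $g\mapsto\Pi_N(g\psi_0)$. Because $n>N$, the image of ${\cal G}$ is the whole closed unit ball of $\C^N$, and computing its differential shows that ${\cal G}$ is a submersion at every $g$ with $\|\Pi_N(g\psi_0)\|<1$. Since $\|\Pi_N(\psi_1)\|<1$, the point $\Pi_N(\psi_1)$ lies in the interior of the image and ${\cal G}$ is a submersion along the fibre over it, so ${\cal G}$ admits a smooth local section $\gamma$ on a ball $\bar B(\Pi_N(\psi_1),\rho)\subset\C^N$, with $\Pi_N(\gamma(x)\psi_0)=x$. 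Running the construction of the previous paragraph so that the $\{0,1\}$-valued control realizing a prescribed $SU(n)$ element depends continuously on that element — legitimate since the durations, periods and duty cycles involved vary continuously and, by Remark~\ref{rk:continuitybilinear}, propagators depend continuously on controls — yields, for a prescribed $\varepsilon<\rho$, a continuous family $(u_x)_{x\in\bar B(\Pi_N(\psi_1),\rho)}$ of $\{0,1\}$-valued piecewise constant controls with $\|\Pi_N(\Upsilon^{u_x}_T\psi_0)-x\|<\varepsilon$ for all $x$. The continuous map $F(x)=\Pi_N(\Upsilon^{u_x}_T\psi_0)$ from $\bar B(\Pi_N(\psi_1),\rho)$ to $\C^N$ is then $\varepsilon$-close to the identity on the boundary sphere, so homotopy invariance of the topological degree gives $\Pi_N(\psi_1)\in F(\bar B(\Pi_N(\psi_1),\rho))$; any $x$ with $F(x)=\Pi_N(\psi_1)$ provides the desired control.

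The step I expect to be the main obstacle is the coordination of the last two: arranging the $\{0,1\}$-valued realization of the prescribed $SU(n)$ motion to depend continuously on the target and with a uniformly small error, so that the degree argument applies. The remaining ingredients are, in order, classical finite-dimensional geometric control on a compact group, the averaging/resonance analysis that is the core of the Lie--Galerkin technique, and the two soft facts already isolated in the paper — continuity of propagators (Remark~\ref{rk:continuitybilinear}) and bang-bang approximation (Section~\ref{sec:bangbang}).
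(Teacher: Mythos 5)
Your proposal is correct in outline and follows essentially the same architecture as the paper: finite-dimensional controllability from $\mathrm{Lie}\,\mathcal{M}_n\supseteq\mathfrak{su}(n)$, realization of the exponentials of the selected matrices by averaging (this is Proposition~\ref{prop:convergencepropagators}, which tracks products of exponentials of elements of $\mathcal{W}_n$), bang-bang replacement via Section~\ref{sec:bangbang} together with Remark~\ref{rk:continuitybilinear}, and a degree argument exploiting $\|\Pi_N(\psi_1)\|<1$. The one genuine divergence is how the degree argument is parametrized, and it is exactly the point you flag as your ``main obstacle.'' You propose to section the submersion $g\mapsto\Pi_N(g\psi_0)$ over a ball in $\C^N$ and then realize $\gamma(x)$ continuously in $x$; but mere transitivity of the attainable group gives no continuous choice of realizing controls, so this step is not justified as stated. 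The paper avoids the issue by running the degree argument in the \emph{time} parameters rather than in the target: by Sussmann's normal controllability theorem (Section~\ref{sec:normal}) one fixes a single word $e^{s_kM_k}\circ\cdots\circ e^{s_1M_1}$ whose endpoint map has full rank $n^2-1$ at some $(t_1,\ldots,t_k)$, extracts $\nu=n^2-1$ of the times so that $(s_{j_1},\ldots,s_{j_\nu})\mapsto E(\cdots)\Pi_N\psi_0$ is a homeomorphism of a small ball onto its image containing $\Pi_N\psi_1$, and then Proposition~\ref{prop:convergencepropagators} supplies bang-bang controls depending continuously on these times with a uniformly small error on the boundary, so that Lemma~\ref{LEM_stabilite_structurelle_surjection} applies directly. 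Substituting this normal-controllability parametrization for your abstract section closes the gap and makes your argument coincide with the paper's.
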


\subsection{Bang-bang approximation of piecewise constant functions}\label{sec:bangbang}

In this section we prove that it is possible to approximate 
the propagator of a particular finite dimensional system associated with piecewise constant controls with the one associated with suitable controls taking only two values. As mentioned, this approximantion, together with the continuity of the propagators is crucial in the  extension of the controllability results from bilinear systems to linear ones.

\begin{lemma}\label{LEM_tripoint}
Let $u:[0,T]\to [0,\delta]$, $\delta>0$ be a piecewise constant function and let $a>\delta$. There exists a sequence $(w_k)_{k \in \N}$   of piecewise constant functions $w_k:[0,T]\to \{0,a\}$ uniformly bounded in $L^1([0,T],\R)$ by $\|u\|_{L^1}$, and continuously depending on $u$ for the $L^1$ topology, such that
$$
\int_0^t w_k(s) ds \to \int_0^t u(s) ds, \quad \mbox{ as } k \to \infty,
$$
uniformly for $t$ in $[0,T]$. 
\end{lemma}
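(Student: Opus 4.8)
The plan is to use a pulse-width modulation (PWM) construction. First I would fix a uniform subdivision of $[0,T]$ of mesh $h=T/k$, with nodes $t_j=jh$ for $j=0,\ldots,k$, chosen independently of $u$. On each cell $I_j=[t_{j-1},t_j)$ I would compute the mean value $c_j=\frac1h\int_{I_j}u(s)\,ds\in[0,\delta]$ and, exploiting that $a>\delta$ gives $c_j/a<1$, replace $u$ on $I_j$ by the two-valued function equal to $a$ on the left subinterval $[t_{j-1},t_{j-1}+(c_j/a)h)$ and $0$ on the remainder. Setting
$$
w_k=\sum_{j=1}^{k}a\,\mathbf{1}_{[t_{j-1},\,t_{j-1}+(c_j/a)h)}
$$
produces a piecewise constant function $[0,T]\to\{0,a\}$ (the left subinterval is genuinely contained in $I_j$ since $(c_j/a)h\le(\delta/a)h<h$).

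Next I would verify the running-integral convergence. By construction $\int_{I_j}w_k=c_jh=\int_{I_j}u$ for every $j$, hence the primitives of $w_k$ and $u$ agree at all nodes $t_j$; for a general $t\in[t_{j-1},t_j)$ the difference $\int_0^tw_k-\int_0^tu$ equals $\int_{t_{j-1}}^t(w_k-u)$, whose absolute value is bounded by $(a+\delta)h$. This yields $\sup_{t\in[0,T]}|\int_0^tw_k-\int_0^tu|\le(a+\delta)T/k\to0$, i.e. the claimed uniform convergence.

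Then I would dispatch the two structural properties. Since $u\ge0$ and $w_k\ge0$, one has $\|w_k\|_{L^1}=\sum_{j}c_jh=\sum_{j}\int_{I_j}u=\|u\|_{L^1}$, so the bound by $\|u\|_{L^1}$ holds (with equality, uniformly in $k$). For the continuous dependence on $u$ in the $L^1$ topology, with $k$ fixed, I would note that $u\mapsto(c_1,\ldots,c_k)$ is a bounded linear map $L^1([0,T])\to\R^k$, hence continuous, and that each summand, as a function of $c_j\in[0,\delta]$, is Lipschitz into $L^1([0,T])$ because $\|a\mathbf{1}_{[\alpha,\alpha+(c/a)\ell)}-a\mathbf{1}_{[\alpha,\alpha+(c'/a)\ell)}\|_{L^1}=\ell\,|c-c'|$; composing these exhibits $u\mapsto w_k$ as a continuous map.

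The main obstacle is reconciling the continuous dependence on $u$ with the rigid two-valued structure of $w_k$: one is tempted to adapt the subdivision to the jumps of $u$, but then the construction would not depend continuously on $u$ near data whose discontinuity set varies. Using a fixed, $u$-independent grid and the cell averages $c_j$ circumvents this, the only price being the harmless $O(1/k)$ discrepancy on cells where $u$ is not constant, which is absorbed in the estimate above. Everything else is elementary bookkeeping.
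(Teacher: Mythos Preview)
Your proof is correct and follows essentially the same pulse-width modulation approach as the paper---a uniform grid of mesh $T/k$, cell-by-cell matching of integrals, and the $(a+\delta)T/k$ bound on the difference of primitives---the only cosmetic difference being that the paper places the $a$-pulse at the right end of each cell rather than the left. You also spell out the continuous dependence on $u$ in $L^1$, which the paper states but does not prove.
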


\begin{proof}
We prove the convergence on $[0,T)$. Lemma~\ref{LEM_tripoint} follows on $[0,T]$ by continuity.
Let $k$ in $\N$ and let us divide the interval $[0,T)$ in $k$ intervals $I_h:=[hT/k, (h+1)T/k)$ for $h=0,\dots, k-1$.
Let 
$$
U_h = \int_{I_h} u(s) ds.
$$
We define the piecewise constant function $w_k:[0,T]\to \{0,a\}$  as follows
$$
w_k(t) =
\left\{
\begin{array}{ll}
0 & \mbox{for } t \in \left[\frac{hT}{k},\frac{(h+1)T}{k} - \frac{U_h}{a} \right),\\
a & \mbox{for } t \in \left[\frac{(h+1)T}{k} - \frac{U_h}{a}, \frac{(h+1)T}{k} \right),\\
0 &  \mbox{for } t \notin I_h.
\end{array}
\right.
$$
Hence, for every $h=0,\dots, k-1$, we have
$$
\int_{I_h} w_k(s)ds = U_h =  \int_{I_h} u(s) ds.
$$
Now for every $t \in [0,T]$ let $h_t$ be the smallest integer such that 
$(h_t+1) T \geq t k$, then
\begin{align*}
\left|\int_{0}^t w_k(s)ds - \int_{0}^t u(s) ds\right| 
& = 
\left|\int_{\frac{h_t T}{k}}^{t} (w_k(s)- u(s))ds\right| \\
& \leq (\delta + a)\frac{T}{k}.
\end{align*}

Finally, notice that
\begin{align*}
\int_0^T |w_k(s)| \mathrm{d}s & = \sum_{h=0}^{k-1}
 \int_{I_h} |w_k(s)| \mathrm{d}s  \\
 & =  
\sum_{h=0}^{k-1}
 \left | \int_{I_h} u(s) \mathrm{d}s \right | \\
 &  \leq  \sum_{h=0}^{k-1} \int_{I_h} |u(s)| \mathrm{d}s = \int_0^T |u(s)| \mathrm{d}s,
\end{align*}
which ensures the boundedness in $L^1$ and concludes the proof.
\end{proof}

Let $\Theta(t) = e^{-tA}Be^{tA}$. For every $N \in \N$ consider 
$$
\Theta^{(N)}(t) = \Pi_N \Theta(t)\Pi_N = e^{-tA^{(N)}}B^{(N)}e^{tA^{(N)}}.
$$
Then, for every piecewise constant function $v$ let us denote by $X^{(N)}_{t,s}(v)$, the propagator of 
$v(\cdot)\Theta^{(N)}(\cdot)$.

\begin{lemma}
Let $u:[0,T]\to [0,\delta]$, $\delta>0$ be a piecewise constant function and let $a>\delta$. There exists a sequence $(w_k)_{k \in \N}$of piecewise constant functions $w_k:[0,T]\to \{0,a\}$ uniformly bounded in $L^1([0,T],\R)$ by $\|u\|_{L^1}$, such that,
for every $N \in \N$,
$$
X^{(N)}_{t,s}(w_k) \to X^{(N)}_{t,s}(u),  \quad \mbox{ as } k \to \infty,
$$
 uniformly for $s, t$ in  $[0,T]$.
\end{lemma}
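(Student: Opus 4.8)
The plan is to combine Lemma~\ref{LEM_tripoint} with a standard continuous-dependence argument for linear ODEs in finite dimension. First I would invoke Lemma~\ref{LEM_tripoint} to obtain, for the given $u:[0,T]\to[0,\delta]$ and $a>\delta$, the sequence $(w_k)_{k\in\N}$ of $\{0,a\}$-valued piecewise constant functions, uniformly bounded in $L^1$ by $\|u\|_{L^1}$ and such that the primitives $\int_0^t w_k(s)\,ds$ converge uniformly on $[0,T]$ to $\int_0^t u(s)\,ds$. Note this sequence does not depend on $N$, so it suffices to prove the propagator convergence for each fixed $N$.

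Next, fix $N\in\N$. The propagator $X^{(N)}_{t,s}(v)$ solves the finite-dimensional linear matrix ODE $\partial_t X = v(t)\Theta^{(N)}(t) X$, $X_{s,s}=I_N$, where $\Theta^{(N)}(t)=e^{-tA^{(N)}}B^{(N)}e^{tA^{(N)}}$ is a bounded, continuous (indeed real-analytic) matrix-valued function on $[0,T]$, uniformly bounded in operator norm by some constant $C$. The key step is to show that $v\mapsto X^{(N)}_{t,s}(v)$ is continuous with respect to the weak-$*$-type topology in which convergence is driven by uniform convergence of primitives. Writing the Duhamel/variation-of-parameters identity
$$
X^{(N)}_{t,s}(w_k) - X^{(N)}_{t,s}(u) = \int_s^t \big(w_k(\tau)-u(\tau)\big)\,\Theta^{(N)}(\tau)\,X^{(N)}_{\tau,s}(w_k)\,d\tau,
$$
I would integrate by parts in $\tau$, moving the difference $w_k-u$ onto its primitive $W_k(\tau):=\int_s^\tau(w_k-u)$, which converges to $0$ uniformly on $[0,T]$ by the lemma. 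This produces a boundary term of size $\|W_k\|_\infty\,\|\Theta^{(N)}\|_\infty\,\|X^{(N)}(w_k)\|_\infty$ plus an integral term involving $W_k(\tau)$ against $\frac{d}{d\tau}\big(\Theta^{(N)}(\tau)X^{(N)}_{\tau,s}(w_k)\big)$. The derivative of $\Theta^{(N)}$ is again a bounded continuous matrix function, and $\frac{d}{d\tau}X^{(N)}_{\tau,s}(w_k)=w_k(\tau)\Theta^{(N)}(\tau)X^{(N)}_{\tau,s}(w_k)$, whose $L^1$-norm on $[0,T]$ is controlled by $C\|w_k\|_{L^1}\le C\|u\|_{L^1}$ uniformly in $k$ thanks to the uniform $L^1$ bound from Lemma~\ref{LEM_tripoint}. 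Meanwhile the propagators $X^{(N)}_{t,s}(w_k)$ are uniformly bounded by Grönwall: $\|X^{(N)}_{t,s}(w_k)\|\le e^{C\|w_k\|_{L^1}}\le e^{C\|u\|_{L^1}}$, uniformly in $k$ and in $s,t$.

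Putting these together, the right-hand side is bounded by a constant (depending only on $N$, $T$, $C$, $\|u\|_{L^1}$) times $\|W_k\|_{L^\infty([0,T])}$, which tends to $0$ as $k\to\infty$; and since all the constants are uniform in $s,t\in[0,T]$, the convergence $X^{(N)}_{t,s}(w_k)\to X^{(N)}_{t,s}(u)$ is uniform in $s,t$, as required. The step I expect to be the main obstacle is making the integration-by-parts argument clean: one must be careful that $w_k$ and $u$ are only piecewise constant (so $X^{(N)}_{\cdot,s}(w_k)$ is merely Lipschitz, not $C^1$), but this is harmless because the integration by parts can be performed on each subinterval of constancy and summed, and the absolutely continuous functions involved have well-defined a.e. derivatives in $L^1$, which is all Grönwall and the fundamental theorem of calculus for absolutely continuous functions require. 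One should also remark that the bound $\|X^{(N)}_{t,s}(w_k)\|\le e^{C\|u\|_{L^1}}$ is exactly what guarantees that the family $\{\Theta^{(N)}X^{(N)}(w_k)\}_k$ is bounded in $L^\infty$ with derivatives bounded in $L^1$ uniformly in $k$, which is the precise compactness/equicontinuity input that lets the uniform convergence of primitives do its job.
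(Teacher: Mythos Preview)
Your approach is essentially the same as the paper's: both invoke Lemma~\ref{LEM_tripoint} and then use integration by parts to convert the difference $w_k-u$ into its uniformly vanishing primitive, controlling the remaining terms via the uniform $L^1$ bound. The paper is slightly more indirect---it first shows $\int_0^t w_k(s)\Theta^{(N)}(s)\,ds\to\int_0^t u(s)\Theta^{(N)}(s)\,ds$ uniformly and then cites \cite[Lemma~8.2]{AS04} to pass to the propagators---whereas you work directly on the Duhamel difference, which is arguably cleaner and more self-contained.

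One correction: your variation-of-parameters identity is missing a factor. The correct formula is
\[
X^{(N)}_{t,s}(w_k) - X^{(N)}_{t,s}(u) = \int_s^t X^{(N)}_{t,\tau}(u)\,\big(w_k(\tau)-u(\tau)\big)\,\Theta^{(N)}(\tau)\,X^{(N)}_{\tau,s}(w_k)\,d\tau,
\]
with the propagator $X^{(N)}_{t,\tau}(u)$ on the left of the integrand. This does not damage your strategy: after integrating by parts you pick up one extra term from $\partial_\tau X^{(N)}_{t,\tau}(u)=-X^{(N)}_{t,\tau}(u)\,u(\tau)\,\Theta^{(N)}(\tau)$, which is bounded in $L^\infty$ uniformly in $\tau$ (since $|u|\le\delta$) and hence absorbed by the same constants. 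With that fix your argument is complete.
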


\begin{proof}
Let $(w_k)_{K \in N}$ be the sequence of controls whose existence is given by Lemma~\ref{LEM_tripoint}. For every $N \in \N$ and for every $t> 0$,
by integration by parts, 
\begin{align*}
  & \left \|\int_0^t u(s) \Theta^{(N)}(s) \mathrm{d}s -
\int_0^t w_k(s) \Theta^{(N)}(s) \mathrm{d}s \right\| \\
  & \quad\leq  \left \| \left (\int_{\tau=0}^t (u(\tau)-w_k(\tau)) \mathrm{d}\tau \right ) \Theta^{(N)}(t)\right\| \\
   &\qquad + \left \|\int_{s=0}^t   \left ( \int_{\tau=0}^s (u(\tau)-w_k(\tau)) \mathrm{d}\tau \right ) 
 e^{-sA^{(N)}} [B^{(N)},A^{(N)} ]   e^{sA^{(N)}}  \mathrm{d}s\right\|.
\end{align*}

Hence, by Lebesgue Dominated Convergence Theorem, since $(\int_{0}^{t} w_k(s)\mathrm{d}s)_{k \in \N}$ converges   to  $\int_{0}^{t} u(s)\mathrm{d}s$ uniformly for $t$ in  $[0,T]$, then  
$$
\int_0^t w^k(s) \Theta^{(N)}(s) \mathrm{d}s  \to \int_0^t u(s) \Theta^{(N)}(s) \mathrm{d}s, \quad \mbox{ as } k \to \infty,
$$
which, in turn, implies (see, for instance, \cite[Lemma~8.2]{AS04}) that
$$
X^{(N)}_{t,s}(w_k) \to X^{(N)}_{t,s}(u),  \quad \mbox{ as } k \to \infty,
$$
 uniformly for $s, t$ in  $[0,T]$.
\end{proof}

 \begin{lemma}\label{lem:ipsilon}
 Let $(A,B)$ satisfy Assumption~\ref{ass:bls}.
Let $u:\R \to \R$ be a piecewise constant function and $\phi \in \Hc$. 
Then
$$
\frac{d}{dt} \langle \psi, e^{-tA} \Upsilon^u_t(\phi) \rangle = 
- \langle u(t)\Theta(t)\psi, e^{-tA} \Upsilon^u_t(\phi) \rangle,
$$
for every $\psi \in \Psi$ and for almost every $t\in \R$.
\end{lemma}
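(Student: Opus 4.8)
The plan is to differentiate the inner product directly, using the definition of the propagator $\Upsilon^u_t$ as a concatenation of exponentials $e^{t(A+u_jB)}$. First I would reduce to the case where $u$ is constant on a small interval, since $u$ is piecewise constant and the identity is local in $t$ (the derivative exists for a.e.\ $t$, and the only bad points are the finitely many jumps of $u$ in any bounded interval). So fix a maximal interval on which $u(t)\equiv c$ for some $c\in\R$. On such an interval, for $t$ in its interior, $\Upsilon^u_t(\phi)$ lies in $\mathrm{Span}(\Psi)\subset D(A)$ because each $e^{s(A+u_jB)}$ preserves $D(A)$ (here $B$ bounded and $A$ skew-adjoint on $\mathrm{Span}(\Psi)$, as in Assumption~\ref{ass:bls}), so all the differentiations below are legitimate on this dense domain.

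The key computation: write $f(t)=\langle\psi,e^{-tA}\Upsilon^u_t(\phi)\rangle$ and differentiate the product $e^{-tA}\Upsilon^u_t$. Since $\partial_t \Upsilon^u_t(\phi)=(A+cB)\Upsilon^u_t(\phi)$ on the interior of the interval, and $\partial_t e^{-tA}\xi=-Ae^{-tA}\xi$ for $\xi\in D(A)$, the product rule gives
\begin{align*}
\frac{d}{dt}\big(e^{-tA}\Upsilon^u_t(\phi)\big)
&= -Ae^{-tA}\Upsilon^u_t(\phi) + e^{-tA}(A+cB)\Upsilon^u_t(\phi)\\
&= e^{-tA}\,cB\,\Upsilon^u_t(\phi),
\end{align*}
using that $e^{-tA}$ commutes with $A$. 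Pairing with $\psi$ and moving $e^{-tA}$ to the left factor (it is unitary, hence $\langle\psi,e^{-tA}\eta\rangle=\langle e^{tA}\psi,\eta\rangle$; I should be slightly careful with the sign/adjoint convention, $(e^{-tA})^*=e^{tA}$ since $A$ is skew-adjoint) yields
$$
\frac{d}{dt}\langle\psi,e^{-tA}\Upsilon^u_t(\phi)\rangle
= \langle e^{tA}\psi,\, cB\,\Upsilon^u_t(\phi)\rangle
= -\langle c\,e^{-tA}Be^{tA}\psi,\, e^{-tA}\Upsilon^u_t(\phi)\rangle,
$$
where in the last step I inserted $e^{-tA}e^{tA}=\mathrm{Id}$ inside the left slot and used skew-symmetry of $B$ (so that $\langle e^{tA}\psi, cB\xi\rangle = -\langle cBe^{tA}\psi,\xi\rangle$, then conjugate by $e^{-tA}$). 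Since $u(t)=c$ on the interval and $\Theta(t)=e^{-tA}Be^{tA}$ by definition, this is exactly $-\langle u(t)\Theta(t)\psi, e^{-tA}\Upsilon^u_t(\phi)\rangle$, as claimed.

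The main obstacle is purely a matter of rigor with unbounded operators rather than any real difficulty: one must check that $\Upsilon^u_t(\phi)$ genuinely lies in $D(A)$ for $t$ away from the jump times so that the formal product rule is valid, and keep the adjoint conventions for the skew-adjoint $A$ straight (I have written $e^{-tA}$ throughout, matching the statement, and $(e^{-tA})^*=e^{tA}$). Finiteness of the set of jump points of $u$ in any compact subinterval handles the ``almost every $t$'' quantifier. Once these points are nailed down the proof is the two-line differentiation above.
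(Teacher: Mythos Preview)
Your algebraic computation is correct and leads to the same identity as the paper, but there is a domain issue you glossed over. The lemma is stated for an arbitrary $\phi\in\Hc$, not for $\phi\in D(A)$. Your claim that ``$\Upsilon^u_t(\phi)$ lies in $\mathrm{Span}(\Psi)$'' is false: the group $e^{s(A+cB)}$ has no reason to preserve the (algebraic) span of the eigenbasis. The weaker statement you actually need---that $\Upsilon^u_t(\phi)\in D(A)$---is true only when $\phi\in D(A)$, since $e^{s(A+cB)}$ preserves $D(A+cB)=D(A)$ but does not improve regularity. So the strong product-rule differentiation of $e^{-tA}\Upsilon^u_t(\phi)$ in $\Hc$ is not justified for general $\phi$.

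The paper avoids this entirely by exploiting that $\psi\in\Psi$ is an eigenvector, $A\psi=i\lambda\psi$. It writes $\langle\psi,e^{-tA}\Upsilon^u_t(\phi)\rangle=e^{it\lambda}\langle\psi,\Upsilon^u_t(\phi)\rangle$ and differentiates the scalar product $\langle\psi,\Upsilon^u_t(\phi)\rangle$ \emph{weakly}, moving the generator to the left slot where $\psi\in D(A+uB)$ sits; the $A\psi=i\lambda\psi$ contribution then cancels against the derivative of $e^{it\lambda}$. This requires no regularity on $\phi$. Your argument can be repaired either by inserting this eigenvector trick, or by first proving the identity for $\phi\in D(A)$ as you do and then passing to general $\phi\in\Hc$ by density (integrate both sides, use continuity of $\Upsilon^u_t$ and boundedness of $B$, then differentiate back).
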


\begin{proof}
Let
$$
y(t) = e^{-t A} \Upsilon^u_{t}(\psi). 
$$
Let $i\lambda$ be the eigenvalue of $A$ associated with $\psi \in \Psi$, namely $A\psi = i \lambda \psi$. 
Then
\begin{align*}
 \frac{d}{dt} \langle \psi, y(t) \rangle & = 
  \frac{d}{dt}   e^{i t \lambda} \langle \psi, \Upsilon^u_{t}(\phi)\rangle\\
  & =  i \lambda  e^{i t \lambda} \langle \psi, \Upsilon^u_{t}(\phi)\rangle
   +  e^{i t \lambda} \frac{d}{dt}   \langle \psi, \Upsilon^u_{t}(\phi)\rangle
    \\
    & = i \lambda  e^{it \lambda} \langle \psi,\Upsilon^u_{t}(\phi)\rangle
   -  e^{i t \lambda} 
    \langle (A + u(t) B)\psi,  \Upsilon^u_{t}(\phi)\rangle
    \\
&=  
- e^{it \lambda}\langle 
   u(t) B
\psi,  e^{tA}y(t)\rangle\\
& = - \langle u(t)\Theta(t)\psi,  y(t)\rangle,
\end{align*}
 for almost every $t\in \R$.
\end{proof}

 \subsection{Tracking of admissible matrices}

Recall that system~\eqref{eq:BSE} satisfies the Lie--Galerkin condition. 
For $n_0 \in \N$ let  $n > n_0$ be given by the Lie--Galerkin condition.
Consider the collection of matrices
\begin{align*}
 \mathcal{W}_{n}
 = &
\left\{A^{(n)} 
\right\}
\cup \left\{{\cal E}_0(B^{(n)})\right\}\\
&\cup \left\{{\cal E}_0(B^{(n)})+\nu{\cal E}_\sigma(B^{(n)})\mid
\sigma\in \Xi_n,\sigma\ne 0, \nu \in (-1/2,1/2)
\right\},
\end{align*}
where 
$\Xi_n$  is defined in
\eqref{sigmabar}.
Notice that for every $\sigma \in \Xi_n$~\eqref{eq:comm} holds true for the operator $\mathcal{E}_\sigma(B^{(N)})$, namely, 
$\left[\Pi_n, \mathcal{E}_\sigma(B^{(N)})
\right]=0$ for every $N>n$ and for every $\sigma$ in $\Xi_n$.

\begin{proposition}\label{prop:convergencepropagators}
Let $(A,B)$ satisfy Assumption~\ref{ass:bls}.
Let $q \in \N$, $a,b \in \R$ with $0<a<b$, and $M_1, \ldots, M_q \in \mathcal{W}_n$.
For every $\varepsilon>0$ and
$\tau_1, \ldots, \tau_q \in [a,b]$
there exist $w:[0,T] \to \{0,1\}$ piecewise constant and $\gamma \geq 0$ such that 
$$
\|
\Pi_n\Upsilon^{w}_{T} -   e^{\tau_q M_q} \circ \dots \circ e^{\tau_1 M_1}
\|_{L(\Pi_n(\Hc),\Hc)} < \varepsilon.
$$
Moreover, 
$w$ can be taken  
continuously depending on $\tau_1, \ldots, \tau_q \in [a,b]$.
\end{proposition}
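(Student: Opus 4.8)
\emph{Overview.} The plan is to reduce the statement to a single factor $e^{\tau M}$ with $M\in\mathcal{W}_n$ and $\tau\in[a,b]$, to realize such a factor by a rotating‑wave/averaging argument carried out on the Galerkin truncations and then turned into a two‑valued control via the approximation lemmas of Section~\ref{sec:bangbang}, and finally to recover the general product by concatenation. For the reduction one uses that every $M\in\mathcal{W}_n$ is skew‑Hermitian, so each $e^{\tau M}$ is unitary on ${\cal L}_n^\Phi\cong\C^n$; since $\Upsilon^w$ is unitary on $\Hc$, an estimate $\|\Pi_n\Upsilon^w_{T'}\Pi_n-e^{\tau M}\|<\varepsilon'$ forces $\|(I-\Pi_n)\Upsilon^w_{T'}\psi\|\le\sqrt{2\varepsilon'}$ for every unit $\psi\in{\cal L}_n^\Phi$, i.e. the one‑factor propagator almost preserves ${\cal L}_n^\Phi$. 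Hence, concatenating controls realizing $e^{\tau_1 M_1},\dots,e^{\tau_q M_q}$ one at a time and telescoping, the error in $\|\Pi_n\Upsilon^w_T-e^{\tau_q M_q}\circ\dots\circ e^{\tau_1 M_1}\|$ is $O(q\sqrt{\varepsilon'})$, made $<\varepsilon$ by taking $\varepsilon'$ small; continuous dependence on $(\tau_1,\dots,\tau_q)$ is inherited from that of each factor and from $L^1$‑continuity of concatenation on fixed intervals. The factor $M=A^{(n)}$ is trivial: take $w\equiv 0$ on $[0,\tau]$; since $A$ is diagonal in $\Phi$, $\Upsilon^0_\tau=e^{\tau A}$ commutes with $\Pi_n$ and restricts to $e^{\tau A^{(n)}}$ on ${\cal L}_n^\Phi$, exactly and continuously in $\tau$. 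It remains to handle $M=\mathcal{E}_0(B^{(n)})+\nu\,\mathcal{E}_\sigma(B^{(n)})$ with $\sigma\in\Xi_n$, $\nu\in(-\tfrac12,\tfrac12)$ (the case $\sigma=0$ giving $M=\mathcal{E}_0(B^{(n)})$).

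\emph{The oscillatory factor.} Passing to the interaction picture via Lemma~\ref{lem:ipsilon}, $\Upsilon^w_t=e^{tA}X_{t,0}(w)$ with $X$ the propagator of $w(\cdot)\Theta(\cdot)$, so $\Pi_n\Upsilon^w_t\Pi_n=e^{tA^{(n)}}\Pi_n X_{t,0}(w)\Pi_n$, which after Galerkin truncation at level $N>n$ is approximated by $e^{tA^{(n)}}\Pi_n X^{(N)}_{t,0}(w)\Pi_n$, the error being controlled uniformly over controls of $L^1$‑norm bounded by a fixed $R$ by the standard Galerkin convergence for~\eqref{eq:BSE} with $B$ bounded. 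Fix such an $N$, and fix a horizon $T$ so large that $\|e^{TA^{(n)}}-I\|$ is small — possible for arbitrarily large $T$ because $t\mapsto e^{tA^{(n)}}$ is almost periodic, hence recurrent near $I$ — and so that the averaging estimate below holds within the prescribed tolerance for all amplitudes $\le b/T$. For $\tau\in[a,b]$ let $u_\tau$ be a piecewise‑constant sampling, on a fixed fine mesh, of $t\mapsto\frac{\tau}{T}(1+2\nu\cos(\sigma t))$ on $[0,T]$; for $T$ large it is $[0,\delta]$‑valued with $\delta<1$ fixed (here $|\nu|<\tfrac12$ is exactly what keeps it nonnegative), depends continuously on $\tau$, and has $L^1$‑norm $\le R$. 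The time‑mean of $\Theta(\cdot)$ is $\mathcal{E}_0(B)$ and that of $\cos(\sigma\cdot)\Theta(\cdot)$ is $\tfrac12\mathcal{E}_\sigma(B)$, so the averaged interaction generator is $\frac{\tau}{T}\big(\mathcal{E}_0(B^{(N)})+\nu\mathcal{E}_\sigma(B^{(N)})\big)$; since $0,\sigma\in\Xi_n$, by~\eqref{eq:comm} this block‑diagonalizes along ${\cal L}_n^\Phi$ and restricts there to $\frac{\tau}{T}M$. First‑order averaging (the second‑order, Bloch–Siegert–type corrections and the residual inter‑block leakage being $O(\tau/T)\to0$ as $T\to\infty$) gives $\Pi_n X^{(N)}_{T,0}(u_\tau)\Pi_n$ close to $e^{\tau M}$, hence $\Pi_n\Upsilon^{u_\tau}_{T}\Pi_n$ within $\varepsilon/2$ of $e^{TA^{(n)}}e^{\tau M}$, hence of $e^{\tau M}$ by the choice of $T$.

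\emph{Bang‑bang realization and conclusion.} Apply Lemma~\ref{LEM_tripoint} to $u_\tau$ with $a=1>\delta$: it produces bang‑bang controls $w_k:[0,T]\to\{0,1\}$, $L^1$‑bounded by $\|u_\tau\|_{L^1}\le R$, depending continuously on $u_\tau$ (hence on $\tau$), with $\int_0^\cdot w_k\to\int_0^\cdot u_\tau$ uniformly. By the convergence $X^{(N)}_{t,s}(w_k)\to X^{(N)}_{t,s}(u_\tau)$ established above in Section~\ref{sec:bangbang}, $e^{TA^{(n)}}\Pi_n X^{(N)}_{T,0}(w_k)\Pi_n\to e^{TA^{(n)}}\Pi_n X^{(N)}_{T,0}(u_\tau)\Pi_n$; combined with the uniform Galerkin estimate applied to the $w_k$ (which also satisfy $\|w_k\|_{L^1}\le R$) and with the previous paragraph, $\|\Pi_n\Upsilon^{w_k}_T\Pi_n-e^{\tau M}\|<\varepsilon$ for all large $k$, with the threshold $k$ chosen uniformly in $\tau\in[a,b]$ by compactness. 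Taking $w=w_k$ and concatenating over the $q$ factors (as in the first paragraph) finishes the proof, the concatenated $w$ being continuous in $(\tau_1,\dots,\tau_q)$.

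\emph{Main obstacle.} The delicate point is the averaging step together with the uniform‑in‑control choice of the truncation level $N$: one has to verify that the second‑order averaging corrections and the leakage from ${\cal L}_n^\Phi$ to higher modes both vanish in the regime "amplitude $\to0$, horizon $\to\infty$ with their product fixed", and that the Galerkin error depends on the control only through $\|w\|_{L^1}$. Everything else — the concatenation, the free factor, and the bang‑bang replacement — is routine given the lemmas of Section~\ref{sec:bangbang}.
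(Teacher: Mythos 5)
Your overall architecture is the right one and matches the paper's: first realize the product of exponentials in projection with small-amplitude $[0,\delta]$-valued controls (you re-derive this via the rotating-wave/averaging mechanism, which is exactly the content of the result the paper imports from \cite[Proposition~4.1]{Esatta}), then replace those controls by $\{0,1\}$-valued ones via Lemma~\ref{LEM_tripoint} and the convergence of the truncated propagators $X^{(N)}$, and absorb the residual free factor by recurrence of $t\mapsto e^{tA}$. Your factor-by-factor concatenation with the almost-invariance of $\mathcal{L}_n^\Phi$ and a telescoping estimate is a legitimate variant of treating the whole product at once.

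The genuine gap is the step that transfers the finite-dimensional conclusion about $X^{(N)}$ back to the true propagator $\Upsilon^w$. You invoke ``standard Galerkin convergence for \eqref{eq:BSE} with $B$ bounded, uniform over controls with $\|w\|_{L^1}\le R$''. No such result is available under Assumption~\ref{ass:bls} alone: the Duhamel remainder in the interaction picture is $\int_0^t v(s)\,X^{(N)}_{t,s}(v)\,\Pi_N\Theta(s)(I-\Pi_N)\,y(s)\,ds$, and the naive bound produces $\|\Pi_N B(I-\Pi_N)\|\,\|v\|_{L^1}$, which does \emph{not} tend to $0$ as $N\to\infty$ for a general bounded skew-symmetric $B$; only $\|\Pi_m B(I-\Pi_N)\|\to 0$ for \emph{fixed} $m$, by compactness of $\Pi_m B$. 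Uniform-in-control Galerkin estimates in the literature require extra structure (weak coupling, bounds in terms of powers of $|A|$), not merely an $L^1$ bound on the control. The paper closes exactly this hole by a different mechanism: it only needs the $\Pi_n$-projection of the remainder, writes $\Pi_nX^{(N)}_{t,s}=X^{(N)}_{t,s}\Pi_n+[\Pi_n,X^{(N)}_{t,s}]$, bounds the first piece by $\|\Pi_nB(I-\Pi_N)\|\,\|v\|_{L^1}$ (small by compactness of $\Pi_n B$), and bounds the commutator using the fact that, for the specific controls in play, $X^{(N)}_{t,s}$ is close to products of exponentials of matrices in $\mathcal{W}_n$, which commute with $\Pi_n$ by \eqref{eq:comm}. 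Your controls $u_\tau$ and their bang-bang approximants do enjoy this near-commutation property (that is precisely what your averaging step establishes), so the argument is repairable; but this repair is the analytic heart of the proposition and cannot be delegated to a ``standard'' uniform Galerkin lemma that does not exist in this generality. You correctly flag the point as the main obstacle, yet as written it remains unproved, and the mechanism you point to (dependence of the Galerkin error on the control only through $\|w\|_{L^1}$) is not the correct one.
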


\begin{proof}
Following~\cite[Proposition~4.1]{Esatta} we have, for every $\eta,\delta>0$, that there exist
$u:[0,T_u] \to [0,\delta]$ piecewise constant and $\gamma \geq 0$ such that 
\begin{equation}\label{eq:3141}
\|
\Upsilon^{u}_{T_u} - e^{\gamma A} \circ e^{\tau_q M_q} \circ \dots \circ e^{\tau_1 M_1}
\|_{L(\Pi_n(\Hc),\Hc)} < \eta,
\end{equation}
where $u$ depends continuously on $\tau_1, \ldots, \tau_q \in [a,b]$ and $\gamma$ is independent on $\tau_1, \ldots, \tau_q \in [a,b]$. Moreover the $L^1$-norm of $u$ is independent on $\eta$ (see \cite[Section 3.1]{Esatta} and~\cite{periodic}). Let $\delta<1$, then there exists $K>0$ such that
every $u$ in~\eqref{eq:3141} satisfies $\|u\|_{L^1} \leq K$.

Let $N>n$ be such that 
$$
\|\Pi_n B (I-\Pi_N)\| \leq \frac{\varepsilon}{10K}.
$$
The existence of such a $N$ is guaranteed by the compactness of $\Pi_n B$ (bounded with finite rank) using the fact that $\Psi\subset D(B)$ and that $B$ is skew--symmetric. 
Then, call $\xi = \|\Pi_N B (I-\Pi_N)\|$, 
and consider the control $u$ associated with $\eta=\varepsilon/(10\max\{1,K\xi\})$ in~\eqref{eq:3141}.

Let $(w_k)_{k \in \N}$ be a piecewise constant functions $w_k:[0,T_u]\to \{0,1\}$ associated with (and continuously depending on) $u$, the existence of which is given by Lemma~\ref{LEM_tripoint}.
For every $\psi \in \Pi_n(\Hc)$ with norm 1, let
$$
y(t) = e^{-tA}\Upsilon_t^{u}(\psi) \quad \mbox{ and } \quad 
y_k(t) = e^{-tA}\Upsilon_t^{w_k}(\psi).
$$

We deduce from Lemma~\ref{lem:ipsilon} and from variation of constants formula that  for any $\psi\in{\cal L}_N$
\begin{align*}
 \Pi_n e^{-tA}\Upsilon^v_t \psi &= 
\Pi_n
X^{(N)}_{t,0}(v)\left(\psi\right)\\
&+ 
\Pi_n\int_0^t
X^{(N)}_{t,s}(v)
\Pi_N 
v(s)
\Theta\left(s\right)
(\mathrm{I}-\Pi_N) 
e^{-sA}\Upsilon^v_s \psi
ds.
\end{align*}
Hence
\begin{align*}
\|\Pi_n&(y_k(t) - y(t))\| \leq
\|\Pi_n\left(X^{(N)}_{t,0}(u) - X^{(N)}_{t,0}(w_k)\right)\psi\|
\\
&\quad + 
	\|\Pi_n \int_{s=0}^t
	\left(
	X^{(N)}_{t,s}(w_k) - X^{(N)}_{t,s}(u)
	\right)
	\Pi_N w_k(s)\Theta(s) (I-\Pi_N) y_k(s)
	ds\|
\\
&\qquad + 
\|
\Pi_n
\int_{s=0}^t X^{(N)}_{t,s}(u) \Pi_N\Theta(s) (I-\Pi_N) (u(s)y(s) - w_k(s)y_k(s))ds
\|.
\end{align*}
Now 
\begin{align*}
\int_{s=0}^t
&
\Pi_n
 X^{(N)}_{t,s}(u) \Pi_N\Theta(s) (I-\Pi_N) (u(s)y(s) - w_k(s)y_k(s))ds
=\\
&=\quad \int_{s=0}^t
 X^{(N)}_{t,s}(u) \Pi_n\Theta(s) (I-\Pi_N) (u(s)y(s) - w_k(s)y_k(s))ds
 \\
&\qquad -
\int_{s=0}^t
 \left[X^{(N)}_{t,s}(u),\Pi_n\right] \Pi_N\Theta(s) (I-\Pi_N) (u(s)y(s) - w_k(s)y_k(s))ds.
\end{align*}

Notice that by~\eqref{eq:3141} and for the definition of the set 
$\mathcal{W}_n$ and of $\Xi_n$ in~\eqref{sigmabar} we have that
$$
\sup_{s<t}\left\|\left[X^{(N)}_{t,s}(u),\Pi_n\right] \right\|< \eta=\varepsilon/(10\max\{1,K\xi\}),
$$
indeed $X^{(N)}_{t,s}(u)$ is ``close'' (see also~\cite[(3.8)]{Esatta}) to the composition of exponential of matrices in $\mathcal{W}_n$ satisfying~\eqref{eq:comm}. 

Then, 
since $\Pi_N \Theta(t) (I-\Pi_N)$ is an operator uniformly bounded with respect to $t \in \R$, 
\begin{align*}
\|
\Pi_n
&
\int_{s=0}^t  X^{(N)}_{t,s}(u) \Pi_N\Theta(s) (I-\Pi_N) (u(s)y(s) - w_k(s)y_k(s))ds
\|\\
&\leq 
\|\Pi_n B (I-\Pi_N)\| (\|u\|_{L^1}+\|w_k\|_{L^1})\\
&\qquad
+ \sup_{s<t}\left\|\left[X^{(N)}_{t,s}(u),\Pi_n\right]\right\| \|\Pi_N B (I-\Pi_N)\|(\|u\|_{L^1}+\|w_k\|_{L^1})\\
& 
\leq \frac{\varepsilon}{5} + \frac{\varepsilon}{5}.
\end{align*}
Finally consider $k \in \N$ sufficiently large such that
$$
\|\Pi_n\left(X^{(N)}_{t,0}(u) - X^{(N)}_{t,0}(w_k)\right)\psi\| < \frac{\varepsilon}{5\max\{K\xi,1\}}.
$$
Hence
$$
\|\Pi_n \int_{s=0}^t
	\left(
	X^{(N)}_{t,s}(w_k) - X^{(N)}_{t,s}(u)
	\right)
	\Pi_N w_k(s)\Theta(s) (I-\Pi_N) y_k(s)
	ds\| \leq  \frac{\varepsilon}{5}.
$$
In conclusion 
$$
\|\Pi_n(y_k(t) - y(t))\| <  \frac{4}{5}\varepsilon.
$$
uniformly with respect to $t \in [0,T_u]$. Since $e^{tA}$ is unitary and $\psi$ arbitrary we have that
$$
\|\Pi_n \left(\Upsilon^{w_k}_t - \Upsilon^{u}_t\right)\Pi_n\|<  \frac{4}{5}\varepsilon.
$$
Hence
\begin{align*}
\|
\Pi_n&\Upsilon^{w}_{T_u} - e^{\gamma A} \circ e^{\tau_k M_k} \circ \dots \circ e^{\tau_1 M_1}
\|_{L(\Pi_n(\Hc),\Hc)}\leq  \\
&\leq  
\|
\Pi_n(\Upsilon^{w_k}_{T_u} 
- \Upsilon^{u}_{T_u})
\|_{L(\Pi_n(\Hc),\Hc)}
+ \\
& \qquad +
\|
\Pi_n\Upsilon^{u}_{T_u}
- e^{\gamma A} \circ e^{\tau_q M_q} \circ \dots \circ e^{\tau_1 M_1}
\|_{L(\Pi_n(\Hc),\Hc)}\\
&\quad \leq \frac{4}{5}\varepsilon + \frac{1}{5}\varepsilon = \varepsilon.
\end{align*}
By Poincar\'e Recurrence Theorem (see~\cite[Lemma~4.2]{Esatta}) it is then sufficient to consider as control $w$ the concatenation of the control $w_k$ on $[0,T_u]$ with a function constantly equal to $0$ for a certain time $T-T_u$.
\end{proof}

\subsection{A degree argument}
\label{SEC_SUB_SUB_Degree}

\begin{lemma}\label{LEM_stabilite_structurelle_surjection}
 Let $X \subset  \R^n$ be open and bounded and let $F \in  C(X, \R^n )$  be a homeomorphism between
$X$ and $F(X)$. 
Let $G \in C(X, \R^n )$
and $\varepsilon:=\max_{x \in \partial X} |F (x) - G(x)|$.
If $y_0$ is in $F(X)$ and such that $\varepsilon < \mathrm{dist}(y_0,F(\partial X))$  
then $y_0 \in \mathrm{int}(G(X))$.
\end{lemma}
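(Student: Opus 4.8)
Proof strategy (sketch).

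The plan is to run the standard Brouwer-degree argument, writing $\deg(\,\cdot\,,X,\,\cdot\,)$ for the topological degree. I read the hypothesis as asserting that $F$ and $G$ extend to elements of $C(\overline X,\R^n)$ — this is implicit, since otherwise $\max_{x\in\partial X}|F(x)-G(x)|$ would be meaningless — so that the degree is well defined on $X$. Put $d:=\mathrm{dist}(y_0,F(\partial X))$ and $\rho:=d-\varepsilon$, which is positive by assumption; in particular $y_0\notin F(\partial X)$, so $\deg(F,X,y_0)$ is defined. The aim is to show that the whole ball $B(y_0,\rho)$ is contained in $G(X)$.

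The first step is to check that $\deg(F,X,y_0)=\pm1$, in particular that it is nonzero. Since $y_0\notin F(\partial X)$ and $F$ is injective on $X$, the set $F^{-1}(y_0)\cap X$ reduces to a single point $x_0$. Choosing a small open ball $B$ with $x_0\in B\subset\overline B\subset X$, the excision property of the degree gives $\deg(F,X,y_0)=\deg(F,B,y_0)$. Now $F|_{\overline B}$ is continuous and injective on a compact set, hence a homeomorphism onto its image, and by invariance of domain $F(B)$ is an open neighbourhood of $y_0$; since the local degree of a homeomorphism at an interior image point is $\pm1$, the claim follows.

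Next come the homotopy and the localisation. Fix any $y$ with $|y-y_0|<\rho$ and consider $H_t(x):=(1-t)F(x)+tG(x)$ for $t\in[0,1]$. For $x\in\partial X$ and $t\in[0,1]$,
\[
|H_t(x)-y|\;\ge\;|F(x)-y|-t\,|G(x)-F(x)|\;\ge\;\big(d-|y-y_0|\big)-\varepsilon\;>\;0,
\]
the last inequality because $|y-y_0|<\rho=d-\varepsilon$. Hence $y\notin H_t(\partial X)$ for all $t$, so $H$ is an admissible homotopy from $F$ to $G$ and homotopy invariance yields $\deg(G,X,y)=\deg(F,X,y)$. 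Moreover $B(y_0,d)$ does not meet $F(\partial X)$, hence lies in one connected component of $\R^n\setminus F(\partial X)$, on which $y\mapsto\deg(F,X,y)$ is constant, equal to $\deg(F,X,y_0)=\pm1$. As $B(y_0,\rho)\subset B(y_0,d)$, we get $\deg(G,X,y)=\pm1\neq0$, and the solution property of the degree gives $y\in G(X)$. Since $y\in B(y_0,\rho)$ was arbitrary, $B(y_0,\rho)\subset G(X)$, i.e.\ $y_0\in\mathrm{int}(G(X))$.

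This is essentially a textbook degree computation; the only points that deserve care are the identification $\deg(F,X,y_0)=\pm1$ (which rests on invariance of domain together with the fact that the local degree of a homeomorphism is $\pm1$) and the observation that all the estimates must be performed uniformly for $y$ ranging over a full ball about $y_0$, so that one actually reaches the \emph{interior} of $G(X)$ rather than merely the image $G(X)$.
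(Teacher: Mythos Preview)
Your argument is correct and is exactly the standard degree-theory proof the paper has in mind; the paper itself does not supply a proof, merely remarking that the result is standard in degree theory and pointing to two external references for this formulation. The care you take in computing $\deg(F,X,y_0)=\pm1$ via invariance of domain and in running the homotopy estimate uniformly over a full ball about $y_0$ is appropriate and fills in precisely what those references contain.
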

Last result is standard in degree theory. The proof for the statement in this form can be found in~\cite[Lemma~5.2]{Esatta} or~\cite[Lemma~7]{AC_drift}.

\subsection{Normal controllability}\label{sec:normal}

Let 
$\psi_0, \psi_1 \in \mathrm{span}(\Phi)$ and 
with $
\|\Pi_N(\psi_1)\| < 1$ and consider $U\in U(\Hc)$ be such that $U\psi_0 = \psi_1$.
Let $n > N$ be such that the Lie--Galerkin  condition holds.
Let now $M \in SU(n)$ be such that
$M^{(N)}:=\Pi_{N}M\Pi_{N} = \Pi_{N}U\Pi_{N}=:U^{(N)}$.
Since, by the Lie--Galerkin condition
$$
\mathrm{Lie}\mathcal{W}_{n} \supset \mathfrak{su}(N),
$$
 we have classical results of \emph{normal controllability} (see~\cite{JS72} and~\cite[Theorem~4.3]{Sussmann76})
implying the existence of
$M_{1},\ldots,M_{k} \in\mathcal{W}_{n}$ and $t_{1},\ldots,t_{k} > 0$ such that the map
\begin{equation}\label{eq:9378}
E: (s_{1},\ldots,s_{k}) \mapsto e^{s_{k}M_{k}} \circ \dots \circ e^{s_{1}M_{1}},
\end{equation}
has rank $	n^2-1$ at $(t_{1},\ldots,t_{k} )$ and 
$$
E(t_{1},\ldots,t_{k} ) = M.
$$
Let us call, for simplicity, ${\nu} = n^2-1$.
By linear extraction there exist $j_{1},\ldots, j_{\nu} \in \{1,\ldots, k\}$ such that the map $F$ defined by
$$
(s_{j_{1}},\ldots,s_{j_{\nu}}) \mapsto \left( E(t_{1},\ldots,t_{j_{1}-1}, s_{j_{1}},t_{j_{1}+1}, \ldots,t_{j_{\nu}-1}, s_{j_{\nu}},t_{j_{\nu}+1},\ldots,t_{k}) \right)\Pi_{N}\psi_0,
$$
has rank $\nu$ at 
$(t_{j_{1}},\ldots,t_{j_{\nu}})$ and 
\begin{equation}
F(t_{j_{1}},\ldots,t_{j_{\nu}}) = M^{(N)}\Pi_N\psi_0 \quad \left(= \Pi_N\psi_1 \right).
\end{equation}
Now let $\varepsilon >0$ be such that
$$
X := B_{\varepsilon}(t_{j_{1}},\ldots,t_{j_{\nu}}) \subset (0,+\infty)^{\nu},
$$
where $B_{\varepsilon}(\bf t)$ (resp. $\overline{B_{\varepsilon}(\bf t)}$) is the open (resp. closed) ball of radius $\epsilon$ centered at ${\bf t} \in \R^{\nu}$. Then
$F$ is a diffeomorphism between $X$ and $F(X)$.
Let 
\begin{equation}\label{eq:eta}
 \eta = \inf_{(s_{1},\ldots,s_{\nu})\in \partial X} \|F(s_{1},\ldots,s_{\nu}) - \Pi_N\psi_1\|,
\end{equation}
and note that $\eta >0$.

\begin{lemma}\label{lem:PConto}
There exists a map associating with every 
$(s_1,\ldots, s_{\nu}) \in \bar X$ a piecewise constant control 
$w:[0,\infty) \to \{0,1\}$ and $T>0$ such that
if $G = \Pi_N \Upsilon^w_{T} \Pi_N\psi_0$ then
$$
\Pi_N\psi_1\in  \mathrm{int}(G(X)).
$$
\end{lemma}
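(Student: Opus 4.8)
The plan is to realise the map $G$ of the statement by feeding the ordered product of exponentials $E$ from \eqref{eq:9378} — the one that defines $F$ — into Proposition~\ref{prop:convergencepropagators}, keeping $(s_1,\dots,s_\nu)$ as free parameters and freezing the remaining times at $t_1,\dots,t_k$, then comparing $G$ with $F$ on $\bar X$ and concluding with the degree argument of Lemma~\ref{LEM_stabilite_structurelle_surjection}. First I would note that, $\bar X\subset(0,+\infty)^\nu$ being compact and $t_1,\dots,t_k$ being fixed positive numbers, there exist $0<a<b$ such that for every $(s_1,\dots,s_\nu)\in\bar X$ the $k$-tuple $(\tau_1,\dots,\tau_k)$ obtained by inserting $s_1,\dots,s_\nu$ in the slots $j_1,\dots,j_\nu$ of $(t_1,\dots,t_k)$ has all its entries in $[a,b]$. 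Hence Proposition~\ref{prop:convergencepropagators}, applied with the fixed matrices $M_1,\dots,M_k\in\mathcal{W}_n$ of \eqref{eq:9378}, is available on the whole parameter range: fixing $\varepsilon'\in(0,\eta)$ with $\eta$ as in \eqref{eq:eta}, it provides a time $T>0$ and, for every $(s_1,\dots,s_\nu)\in\bar X$, a piecewise constant control $w_{(s)}:[0,\infty)\to\{0,1\}$ with $w_{(s)}\equiv 0$ past $T$, depending continuously on $(s_1,\dots,s_\nu)$ for the $L^1$ topology (prolonging by the value $0$ as in \cite[Lemma~4.2]{Esatta} so that the final time $T$ is common to all $(s)$), such that
$$
\| \Pi_n\Upsilon^{w_{(s)}}_{T}- e^{\tau_k M_k}\circ\cdots\circ e^{\tau_1 M_1} \|_{L(\Pi_n(\Hc),\Hc)}<\varepsilon'.
$$

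Set $G(s_1,\dots,s_\nu)=\Pi_N\Upsilon^{w_{(s)}}_T\Pi_N\psi_0$. Applying $\Pi_N$ on the left and the vector $\Pi_N\psi_0$ on the right in the last inequality, and using $\Pi_N\Pi_n=\Pi_N$, $\|\Pi_N\psi_0\|\le 1$ together with the definition of $F$ given before \eqref{eq:eta}, we obtain $\|G(s_1,\dots,s_\nu)-F(s_1,\dots,s_\nu)\|<\varepsilon'$ for every $(s_1,\dots,s_\nu)\in\bar X$, in particular on $\partial X$. Moreover $G$ is continuous on $\bar X$: it is the composition of the continuous map $(s_1,\dots,s_\nu)\mapsto w_{(s)}\in L^1$ with the map $w\mapsto\Pi_N\Upsilon^{w}_T\Pi_N\psi_0$, which is continuous for the $L^1$ topology on $\{0,1\}$-valued controls by Remark~\ref{rk:continuitybilinear} (the control operator $B$ is bounded and $\Pi_N\psi_0\in\mathrm{span}(\Psi)$).

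It then remains to apply Lemma~\ref{LEM_stabilite_structurelle_surjection}: on $X$ the map $F$ is a homeomorphism onto $F(X)$, the point $\Pi_N\psi_1=F(t_{j_1},\dots,t_{j_\nu})$ belongs to $F(X)$, and
$$
\max_{(s)\in\partial X}\|F(s)-G(s)\|\le\varepsilon'<\eta=\mathrm{dist}(\Pi_N\psi_1,F(\partial X)),
$$
so the lemma yields $\Pi_N\psi_1\in\mathrm{int}(G(X))$, which is the assertion. I expect the main obstacle to be precisely the passage just used from the \emph{pointwise} conclusion of Proposition~\ref{prop:convergencepropagators} to a \emph{uniform} one over the compact set $\bar X$ — a single final time $T$, an error $\varepsilon'$ independent of $(s)$, and continuity of $(s)\mapsto w_{(s)}$ — since only under these conditions are the hypotheses of the degree lemma met; this is exactly what the quantitative and continuous formulation of Proposition~\ref{prop:convergencepropagators}, combined with the $L^1$-continuity of the bilinear propagator, is designed to supply.
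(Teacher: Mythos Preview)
Your proposal is correct and follows essentially the same approach as the paper's own proof: apply Proposition~\ref{prop:convergencepropagators} with tolerance $\varepsilon'<\eta$ to obtain a continuous family of bang-bang controls, use Remark~\ref{rk:continuitybilinear} for the $L^1$-continuity of the propagator to make $G$ continuous, and conclude with the degree argument of Lemma~\ref{LEM_stabilite_structurelle_surjection}. Your write-up is in fact more explicit than the paper's (you spell out the compactness argument giving a common interval $[a,b]$, the use of $\Pi_N\Pi_n=\Pi_N$, and the uniformity issue), but the underlying strategy is the same.
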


\begin{proof}
Let $\eta$ be as in~\eqref{eq:eta}. By Proposition~\ref{prop:convergencepropagators} applied with $\varepsilon <  \eta$
there exists $w\in PC([0,\infty),\{0,1\})$ such that
$$
\sup_{(s_1,\dots, s_{\nu})\in\partial X}|F(s_1,\dots, s_{\nu})-G(s_1,\dots, s_{\nu})| < \eta.
$$
The continuity of the control $w$ on $(s_1,\dots, s_{\nu})$
is given by Proposition~\ref{prop:convergencepropagators} while the continuity of the propagator $\Upsilon^w_T$ on the control $w$ is given by~\cite[Corollary~9]{UP} (see also Remark~\ref{rk:continuitybilinear}).
The conclusion then follows from Lemma~\ref{LEM_stabilite_structurelle_surjection}.
\end{proof}

\section{Proof of the main result}\label{sec:linear}
We are now ready to prove the main result that is a consequence of Theorem~\ref{thm:bilinear}.

\begin{proof}[Proof of Theorem~\ref{thm:bilinear}]
Notice that, if
$$
\mathcal{V}_n = \left\{A^{(n)}\right\}
\cup \left\{{\cal E}_\sigma(B^{(n)})\mid \sigma \in \Xi_n\right\},
$$ 
then
$$
\mathrm{Lie}(\mathcal{V}_{n}) = \mathrm{Lie}(\mathcal{M}_{n}).
$$
Indeed for every $\sigma \in \Sigma_n$,
$$
[A^{(n)},\mathcal{E}_\sigma(B^{(n)})] = [-iH_0^{(n)},
\mathcal{E}_\sigma(-iH_1^{(n)}+i H_0^{(n)})]
=[iH_0^{(n)},
\mathcal{E}_\sigma(iH_1^{(n)})].
$$
Moreover if $0 \in \Xi_n$ then
$\mathrm{Lie}(\mathcal{W}_{n})=\mathrm{Lie}(\mathcal{V}_{n})$. In particular by the Lie--Galerkin condition, $\mathrm{Lie}(\mathcal{W}_{n}) \supseteq \mathfrak{su}(n)$ and the result follows from Lemma~\ref{lem:PConto}.
\end{proof}

\begin{proof}[Proof of Theorem~\ref{thm:main}]
Consider $A=-iH(0)$ and $B=-i(H(1)-H(0))$. Then the pair of operators $(A,B)$ satisfies 
Assumption~\ref{ass:bls}. Moreover if~\eqref{eq:main} satifsfies the Lie--Galerkin condition then 
so does \eqref{eq:BSE} by definition.
Theorem~\ref{thm:main} then follows directly from Theorem~\ref{thm:bilinear}
since 
$$
\Upsilon_t^u = \mathcal{Y}^u_t,
$$
for every control $u:\R \to \{0,1\}$ and for every $t \geq 0$. 
\end{proof}

\section{Control of the Schr\"odinger Equation with a polarizability term}\label{SEC_examples}

\subsection{Non-resonance condition}
A simple assumption implying the Lie-Galerkin condition is the existence of a non-resonant chain of connectedness. This notion, presented in~\cite{Schrod2} as a sufficient condition for approximate controllability
for bilinear systems can be extended to~\eqref{eq:main} under Assumption~\ref{ass:minimal} as follows.

\begin{definition}\label{def:nonresonant} 
We say that $S \subset \N^2$ \emph{couples} two levels $l,k$ in $\N$,
if
there exists a finite sequence $\big ((s^{1}_{1},s^{1}_{2}),\ldots,(s^{q}_{1},s^{q}_{2}) \big )$
in $S$ such that
\begin{description}
\item[$(i)$] $s^{1}_{1}=l$ and $s^{q}_{2}=k$;
\item[$(ii)$] $s^{j}_{2}=s^{j+1}_{1}$ for every $1 \leq j \leq q-1$;
\item[$(iii)$] $\langle  \phi_{s^{j}_{1}}, H(1) \phi_{s^{j}_{2}}\rangle \neq 0$ for $1\leq j \leq q$.
\end{description}

$S$ is called a \emph{connectedness chain} if $S$  couples every pair of levels in $\N$.

A connectedness chain is said to be \emph{non-resonant} if for
every $(s_1,s_2)$ in $S$, 
$
|\lambda_{s_1}-\lambda_{s_2}|\neq |\lambda_{t_1}-\lambda_{t_2}| 
$ for every $(t_{1},t_{2})$ in
$\N^2\setminus\{(s_1,s_2),(s_2,s_1)\}$ such that $\langle \phi_{t_{2}}, B \phi_{t_{1}}\rangle  \neq 0$.
\end{definition}

A system admitting a non-resonant chain of connectedness satisfies the Lie-Galerkin condition as the following Lemma states. 

\begin{lemma}\label{prop:nonresonante}
Assume that $\langle \phi_{l}, H(1)\phi_{k} \rangle= 0$ whenever
$l\neq k$ and $\lambda_{l} = \lambda_{k}$. If 
there exists a non-resonant connectedness chain coupling every pair of levels of $H(0)$ then system~\eqref{eq:main} satisfies the Lie--Galerkin condition.
\end{lemma}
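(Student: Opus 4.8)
The plan is to verify the Lie--Galerkin condition of Definition~\ref{def:CCC} directly from the existence of a non-resonant connectedness chain $S$, essentially repeating in the present framework the argument used for bilinear systems in~\cite{Schrod2} and~\cite{BCS14}. First I would fix an arbitrary $n_0 \in \N$. Because $S$ couples every pair of levels, the set of indices reachable from $\{1,\dots,n_0\}$ by edges of $S$ exhausts $\N$; I would choose $n > n_0$ large enough that $\{1,\dots,n\}$ contains, for every pair $k,l \le n_0$, an entire coupling path, and moreover so that $0 \in \Xi_n$ (i.e.\ no edge of $S$ with $\lambda_{k}=\lambda_{l}$ connects a level $\le n$ to a level $>n$; here the hypothesis that $\langle\phi_l,H(1)\phi_k\rangle=0$ whenever $\lambda_l=\lambda_k$, $l\ne k$, makes $\mathcal E_0(H_1^{(n)})$ diagonal, hence trivially block-diagonal, so $0 \in \Xi_n$ for every $n$). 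I would also enlarge $n$ if necessary so that the non-resonance property localizes: for each edge $(s_1,s_2)\in S$ with $s_1,s_2\le n$, the spectral gap $\sigma=|\lambda_{s_1}-\lambda_{s_2}|$ is attained in $H_1$ only at $(s_1,s_2)$ and $(s_2,s_1)$, in particular at no pair $(k,l)$ with $k\le n$, $l>n$, so that $\sigma\in\Xi_n$.

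Next I would analyze the matrices $\mathcal E_\sigma(H_1^{(n)})$ for these admissible gaps $\sigma$. For a non-resonant edge $(s_1,s_2)$, $\mathcal E_\sigma(H_1^{(n)})$ has (up to the diagonal contribution, which is killed by taking the skew-Hermitian part $i\mathcal E_\sigma(H_1^{(n)})$, or absorbed via a bracket with $iH_0^{(n)}$) only the two symmetric off-diagonal entries in positions $(s_1,s_2)$ and $(s_2,s_1)$. Thus $i\mathcal E_\sigma(H_1^{(n)})$ is essentially an elementary rotation generator $\mathfrak e_{s_1 s_2}\in\mathfrak{su}(n)$ in the $(s_1,s_2)$-plane. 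The bracket $[iH_0^{(n)}, i\mathcal E_\sigma(H_1^{(n)})]$ produces the conjugate generator $\mathfrak f_{s_1 s_2}$ (with a factor $\lambda_{s_1}-\lambda_{s_2}\ne 0$), and the bracket of these two yields the diagonal generator $i(E_{s_1 s_1}-E_{s_2 s_2})$. Hence $\mathrm{Lie}\,\mathcal M_n$ contains the copy of $\mathfrak{su}(2)$ associated with every edge of $S$ inside $\{1,\dots,n\}$.

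Then I would invoke the standard fact that the elementary generators $\{\mathfrak e_{kl},\mathfrak f_{kl}\}$ over the edges of a connected graph on $\{1,\dots,n\}$ Lie-generate all of $\mathfrak{su}(n)$: brackets of $\mathfrak e_{kl}$ and $\mathfrak e_{lm}$ (sharing the vertex $l$) produce $\mathfrak e_{km}$, so one gets all $\mathfrak e_{kl}$, $\mathfrak f_{kl}$, $k,l\le n$, and these span $\mathfrak{su}(n)$ as a Lie algebra. Since the subgraph of $S$ on $\{1,\dots,n\}$ is connected by construction, this gives $\mathrm{Lie}\,\mathcal M_n\supseteq\mathfrak{su}(n)$, and together with $0\in\Xi_n$ this is exactly the Lie--Galerkin condition. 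By Theorem~\ref{thm:bilinear} (or directly Theorem~\ref{thm:main}), the conclusion follows.

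The main obstacle I anticipate is the bookkeeping around degenerate eigenvalues and the requirement $\sigma\in\Xi_n$: one must choose $n$ so that \emph{every} admissible edge-gap $\sigma$ genuinely decouples levels $\le n$ from levels $>n$, which is where non-resonance (not merely connectedness) is essential, and one must handle the possibility that several edges share the same gap $\sigma$ (so that $\mathcal E_\sigma(H_1^{(n)})$ is a \emph{sum} of elementary generators rather than a single one) — there the non-resonance hypothesis guarantees that within $\{1,\dots,n\}$ at most one edge, namely $(s_1,s_2)$ and its transpose, contributes, so $\mathcal E_\sigma(H_1^{(n)})$ is as simple as claimed. Making these finitely many choices of $n$ simultaneously (so that one single $n$ works) is the only delicate point; everything else is the routine $\mathfrak{su}(2)$-bracket computation and the connected-graph generation lemma.
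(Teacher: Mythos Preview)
Your approach coincides with the paper's: the first hypothesis makes $\mathcal E_0(H_1^{(n)})$ diagonal so that $0\in\Xi_n$ for every $n$; non-resonance puts each edge-gap of $S$ into $\Xi_n$ and reduces the corresponding $\mathcal E_\sigma(H_1^{(n)})$ to a single elementary $\mathfrak{su}(2)$-type generator; one then invokes the connected-graph generation of $\mathfrak{su}(n)$ exactly as in~\cite{Schrod2}. The paper's three-line proof follows this same outline and simply defers the Lie-algebra step to that reference.

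There is, however, one genuine gap in your write-up. You choose $n$ so that every pair $k,l\le n_0$ is joined by an $S$-path inside $\{1,\dots,n\}$ and then assert that ``the subgraph of $S$ on $\{1,\dots,n\}$ is connected by construction''. That does not follow: your construction only places $\{1,\dots,n_0\}$ in a single component of the induced subgraph; vertices in $\{n_0+1,\dots,n\}$ not lying on any of the chosen paths may form separate components, and then the elementary generators you exhibit do not Lie-generate all of $\mathfrak{su}(n)$ as the Lie--Galerkin condition requires. (Connected graphs on $\N$ exist for which the induced subgraph on $\{1,\dots,n\}$ is disconnected for every $n\ge 2$, so the obstruction is not vacuous.) Note also that the ``enlargement'' you propose for localizing non-resonance is unnecessary: by Definition~\ref{def:nonresonant} the gap of any edge of $S$ is already globally isolated among pairs with nonzero coupling, so $\sigma\in\Xi_n$ automatically once both endpoints are $\le n$. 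The paper's proof glosses over the connectedness issue too---its assertion $\Xi_n=\Sigma_n$ is stronger than what the definition of a non-resonant chain actually yields---and simply points to~\cite{Schrod2}; so you are in good company, but the point you flagged as ``the only delicate point'' is indeed delicate and is not resolved by your proposed choice of $n$.
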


\begin{proof}
The first assumption in the statement of the proposition
implies that 
$0 \in \Xi_n$ for every $n$. 
The non-resonance condition on the eigenvalues implies that, for every $n$, $\Xi_n = \Sigma_n$. 
Since, moreover, $\mathcal{E}_{|\lambda_l - \lambda_k|}(H_1^{(n)})$ has at most two nonzero entries, 
the Lie--Galerkin condition
follows from the existence of a connectedness chain (see, for instance the proof of~\cite[Proposition 3.1]{Schrod2} for details).
\end{proof}

\subsection{Generic bounded coupling  potentials}\label{SEC_bounded_potentials}

Let $\Omega$ be a compact Riemannian manifold or a bounded domain in $\mathbf{R}^n$. Let $V,W_1,W_2:\Omega \to \mathbf{R}$ be three measurable bounded 
functions. We consider the system
\begin{align}
\lefteqn{\mathrm{i}\frac{\partial \psi}{\partial t}(x,t)=
(-\Delta +V(x))\psi(x,t) + u(t) W_1(x)\psi(x,t)} \nonumber \\
& \quad \quad  \quad \quad 
\quad \quad  \quad \quad  \quad   + u^2(t) W_2(x) \psi(x,t),\quad \quad \label{EQ_bilinear_compact_borne}
\end{align}
with $x$ in $\Omega$ and $t$ in $\mathbf{R}$.
Here $\Hc=L^2(\Omega,\mathbf{C})$, and $H(0) = -\Delta + V(x)$.
By Kato-Rellich theorem, the domain $D(H(0))$ of $H(0)$ is equal to the domain of the Laplacian
$H^2_{(0)}=\{\psi \in H^2(\Omega,\mathbf{C})|\psi_{|\partial \Omega}=\Delta \psi_{|\partial 
\Omega}=0\}$ if $\Omega$ is a bounded domain of $\mathbf{R}^n$ 
and it is equal to $H^2(\Omega,\mathbf{C})$ if $\Omega$ is compact manifold.
The operator $H(1) -H(0)= W_1(x) + W_2(x)$ is bounded from $\Hc$ to $\Hc$. 

The existence of a nonresonant chain of connectedness is a generic property~\cite[Theorem 3.4]{genericity-mario-paolo} for systems of the form~\eqref{EQ_bilinear_compact_borne}. 
By Lemma~\ref{prop:nonresonante} system~\eqref{EQ_bilinear_compact_borne}
is exactly controllable in projections for generic control potential $W_1$ and $W_2$.

\section{Conclusions and perspectives}
We presented a sufficient condition for the exact controllability in projections of the linear Schr\"odinger equation with an Hamiltonian that is a bounded perturbation of a free Hamiltonian with pure point spectrum. Most of the results in literature focus on the bilinear case and, indeed, there are several technical challenges arising from the nonlinearity of the Hamiltonian $H(u)$ with respect to the control $u$. 

The condition that $H(1)-H(0)=iB$ is bounded is a quite strong technical assumption. Indeed, this assumption is needed mainly to infer continuity of the propagators, see Remark~\ref{rk:continuitybilinear}, which is crucial in the application of the topological degree argument of Lemma~\ref{LEM_stabilite_structurelle_surjection}. Sufficient conditions for the continuity of the propagator of linear systems  are technically involved and may be hard to check in practice on physical examples (see for instance~\cite{UP}). 

A natural extension of the controllability result in this paper is the analysis of the controllability with several controls. In many examples controllability cannot be achieved with a single scalar control as a consequence of the symmetries of the system.  This happens, for instance, for a planar rotating molecule controlled by one external field only~\cite{quadratique}, which is not controllable.  The Lie--Galerkin condition has been introduced in~\cite{BCS14} exactly to tackle this challenge in the bilinear case.

Finally a challenging perspective is the case in which the free Hamiltonian presents also a continuous spectrum. The Lie--Galerkin methods, indeed, relies on
the lack of coupling between the first $n$ eigenstates and the rest of the spectrum which ensures
 the invariance of the evolution on a suitable Galerkin approximation. 
In order to tackle such a challenge one has to adapt the Lie-Galerkin condition to Hamiltonians having finitely many distinct eigenvalues and estimate the loss of population to the continuous part of the spectrum.

\begin{acknowledgement}
This work is part of the project CONSTAT, supported by the Conseil
R\'egional de Bourgogne Franche Comt\'e and the European Union through the
PO FEDER Bourgogne 2014/2020 programs,  by the French ANR through the
grant QUACO (17-CE40-0007-01) and by EIPHI Graduate School (ANR-17-
EURE-0002). Partially supported by the MIUR Excellence Department Project MatMod@TOV of the Department of Mathematics, University of Rome Tor Vergata CUP E83C23000330006
\end{acknowledgement}

\bibliographystyle{alpha}
\bibliography{biblioteca}

\end{document}